\documentclass[11pt]{amsart}
\usepackage{amsxtra}
\usepackage{amssymb}
\usepackage{mathrsfs} 
\usepackage{amsmath}
\usepackage{graphicx}
\usepackage{enumerate}
\usepackage{array,color}
\usepackage{hyperref}
\usepackage{comment}
\usepackage[all]{xy}
\addtolength{\topmargin}{-0.8cm}
\addtolength{\textheight}{1.6cm}
\addtolength{\evensidemargin}{-1cm}
\addtolength{\oddsidemargin}{-1cm}
\addtolength{\textwidth}{2cm}
\theoremstyle{plain}


\newtheorem{thm}{Theorem}[section]
\newtheorem{lem}[thm]{Lemma}
\theoremstyle{definition}
\newtheorem{defn}[thm]{Definition}
\newtheorem{remk}[thm]{Remark}


\newcommand{\vocab}[1]{\textbf{#1}}










\def \wt {\widetilde}







\newcommand*\pFqskip{8mu}
\catcode`,\active
\newcommand*\pFq{\begingroup
        \catcode`\,\active
        \def ,{\mskip\pFqskip\relax}%
        \dopFq
}
\catcode`\,12
\def\dopFq#1#2#3#4#5{%
        {}_{#1}F_{#2}\biggl[\genfrac..{0pt}{}{#3}{#4};#5\biggr]%
        \endgroup
}

\newcommand*\pPqskip{8mu}
\catcode`,\active
\newcommand*\pPq{\begingroup
        \catcode`\,\active
        \def ,{\mskip\pPqskip\relax}%
        \dopPq
}
\catcode`\,12
\def\dopPq#1#2#3#4#5{%
        {}_{#1}\mathbb{P}_{#2}\biggl[\genfrac..{0pt}{}{#3}{#4};#5\biggr]%
        \endgroup
}

\makeatletter
\newcommand{\symprod}{\mathbin{\mathpalette\make@circled s}}
\newcommand{\make@circled}[2]{%
  \ooalign{$\m@th#1\smallbigcirc{#1}$\cr\hidewidth$\m@th#1#2$\hidewidth\cr}%
}
\newcommand{\smallbigcirc}[1]{%
  \vcenter{\hbox{\scalebox{0.77778}{$\m@th#1\bigcirc$}}}%
}
\makeatother


\begin{document}
\title[Pascal's Matrix, Point Counting on Elliptic Curves and Prolate Functions]
{Pascal's Matrix, Point Counting on Elliptic Curves and Prolate Spheroidal Functions}
\author[W.~Riley Casper]{W.~Riley Casper}
\address{
Department of Mathematics \\
California State University Fullerton \\
Fullerton, CA 92831\\
U.S.A.
}
\email{wcasper@fullerton.edu}  
\date{}
\keywords{Prolate spheroidal functions, discrete-discrete bispectrality, 
Pascal matrix, counting points on elliptic curves}
\subjclass[2020]{Primary 33C90; Secondary 11C20, 34L10, 11T06}
\begin{abstract}
The eigenvectors of the $(N+1)\times (N+1)$ symmetric Pascal matrix $T_N$ are analogs of prolate spheroidal wave functions in the discrete setting.
The generating functions of the eigenvectors of $T_N$ are prolate spheroidal functions in the sense that they are simultaneously eigenfunctions of a third-order differential operator and an integral operator over the critical line $\{z\in\mathbb{C}: \text{Re}(z) = 1/2\}$.
For even, positive integers $N$, we obtain an explicit formula for the generating function of an eigenvector of the symmetric pascal matrix with eigenvalue $1$.
When $N=p-1$ for an odd prime $p$, we show that the generating function is equivalent modulo $p$ to $(\# E_z(\mathbb F_p)-1)^2$, where $\# E_z(\mathbb F_p)$ is the number of points on the Legendre elliptic curve $y^2 = x(x-1)(x-z)$ over the finite field $\mathbb F_p$.
Furthermore when $N=p^n-1$, our generating function is the square of a period of $E_z$ modulo $p^n$ in the open $p$-adic unit disk.
\end{abstract}
\maketitle
\section{Introduction}
Integral operators which have the \emph{prolate spheroidal property} of commuting with a differential operator arise in random matrix theory and signal processing \cite{S,SP,TW1,TW2}.
The most famous example of this is the \vocab{Slepian differential operator}
$$\partial_x(x^2-\tau^2)\partial_x-\omega^2x^2$$
originating from signal processing which commutes with the time and band-limiting operator
$$(T_{\omega,\tau}f)(x) = \int_{-\tau}^\tau \frac{\sin(x-y)}{x-y}f(y)dy.$$
The joint eigenfunctions of these operators are called \vocab{prolate spheroidal wave functions} and have found important applications in numerical analysis, spectral theory, and geophysics.
Importantly, the kernel $K_\omega(x,y)$ of the time and band-limiting operator $T_{\omega,\tau}$ is given (up to a scalar multiple) by
$$K_\omega(x,y) = \int_{-\omega}^\omega \psi_{\text{exp}}(x,z)\overline{\psi_{\text{exp}}(y,z)}dz,$$
where here $\psi_{\text{exp}}$ is the \vocab{exponential bispectral function}
$$\psi_{\text{exp}}(x,z) = e^{2\pi i xz}.$$
Here, a function being \vocab{bispectral} means that $\psi(x,z)$ is a family of eigenfunctions for an operator in $x$, and \emph{simultaneously} a family of eigenfunctions for an operator in $z$ \cite{DG}.

When we replace the exponential bispectral function with either the Airy or Bessel bispectral functions, we obtain the integral operators with the prolate spheroidal property found by Tracy and Widom in random matrix theory \cite{TW1,TW2}.  In fact, this generic recipe can be shown to generate integral operators with the prolate spheroidal property for many bispectral functions \cite{BHY1,CY1}.  
The construction has been extended in many various directions, including discrete-continuous and matrix-valued bispectral functions (eg. orthogonal polynomials or orthogonal matrix polynomials satisfying differential equations) \cite{CGYZ4}.
The eigenfunctions of the commuting differential operator naturally generalize prolate spheroidal functions in various contexts.
Recently, Connes and Moscovici found interesting similarities between eigenvalues of Slepian's prolate operator and the zeros of the Riemann zeta function \cite{CM}.
One motivation of the present paper is to find similar number-theoretic connections for prolate operators in one of these extended contexts.

The $(N+1)\times (N+1)$ (symmetric) Pascal matrix
\begin{equation}T_N = \left[\begin{array}{cccc}
\binom{0+0}{0} & \binom{0+1}{0} & \dots & \binom{0+N}{0}\\
\binom{1+0}{1} & \binom{1+1}{1} & \dots & \binom{1+N}{1}\\
\vdots & \vdots & \ddots & \vdots\\
\binom{N+0}{N} & \binom{N+1}{N} & \dots & \binom{N+N}{N}\\
\end{array}\right]
\end{equation}
is a discrete analog of the integral operator construction discussed above.
In particular, one can view $T_N$ as discrete integral operator
$$(T_N\vec v)_k = \sum_{j=0}^N K_N(j,k)v_j,$$
associated with the discrete-discrete bispectral function $\psi(x,y) = \binom{x}{y}$,
whose kernel is defined by
$$K_N(j,k) = \sum_{\ell=0}^N\psi(j,\ell)\psi(k,\ell) = \binom{j+k}{j}.$$

Based on the prolate spheroidal property discussed in the continuous setting, we should not be surprised that $T_N$ commutes with a discrete analog of a differential operator.
Specifically, tridiagonal matrices may be viewed as discrete versions of second-order differential operators.
In \cite{CZ}, the authors prove that $T_N$ commutes with the $(N+1)\times(N+1)$ tridiagonal matrix
\begin{equation}\label{eqn:jacobi}
J_N = \left[\begin{array}{cccc}
b(0) & a(1) &  0  & \dots \\
a(1) & b(1) & a(2) & \dots \\
 0  & a(2) & b(2) & \dots \\
\vdots & \vdots & \vdots & \ddots\\
\end{array}\right],
\end{equation}
with entries
$$a(n) = (N+1)^2n-n^3,\quad\text{and}\quad b(n) = 2n^3+3n^2+2n-(N+1)^2n.$$
Following this analogy, the eigenvectors of $J_N$ (equiv. of $T_N$) should be a discrete analog of prolate spheroidal wave functions.
Therefore we anticipate that the eigenvectors of $T_N$ to have many applications.
For example, one can use them to generate orthogonal bases of the eigenspaces of the binomial transform \cite{CZ}.
In particular, this makes finding explicit expressions for the eigenvectors an interesting problem.

When $N$ is even, Pascal's matrix $T_N$ has $\lambda=1$ as a simple eigenvalue.
Our first main theorem gives an explicit generating function formula for a corresponding eigenvector.
\smallskip

\noindent
{\bf{Theorem A.}} {\em{
Let $N$ be even.  Then the $(N+1)\times (N+1)$ Pascal matrix $T_N$ has a unique eigenvector $\vec v = (v_k)_{k=0}^N$ with eigenvalue $\lambda=1$ (normalized with $v_N=1$) given by the generating function formula
\begin{equation}
\sum_{k=0}^N v_kz^{N-k} = \pFq{2}{1}{-N/2,N/2+1}{-N}{z}\cdot \pFq{2}{1}{-N/2,N/2+1}{-N}{\frac{z}{z-1}}(1-z)^{N/2}.
\end{equation}
}}
\medskip

For any vector $\vec v\in\mathbb{C}^{N+1}$, we call the expression
$$f(\vec v;z) = \sum_{k=0}^N v_kz^{N-k}$$
the \vocab{generating function} of the vector $\vec v$.
It turns out that if $\vec v$ is an eigenvector of $J_N$, then it is an eigenvector of $T_N$ and the generating function $f(\vec v; z)$ is a classical \vocab{prolate spheroidal function} in the sense that it is simultaneously an eigenfunction of an integral operator and a differential operator.
\smallskip

\noindent
{\bf{Theorem B.}} {\em{
Let $\vec v\in\mathbb{C}^{N+1}$ be an eigenvector of the matrix $J_N$ from Equation \eqref{eqn:jacobi} with eigenvalue $\mu$.
Then $\vec v$ is an eigenvector of $T_N$ for some eigenvalue $\lambda$ of $T_N$ and the generating function $f(\vec v; z)$ satisfies the integral equation
\begin{equation}
\frac{1}{2\pi i}\int_{\text{Re}(w)=\frac{1}{2}} \frac{1}{w^{N+1}(1-w)^{N+1}(1-z+zw)}f\left(\vec v; w\right) dw = \lambda f(\vec v;z)    
\end{equation}
and the third-order differential equation
\begin{align}
\mu y\nonumber
  &= z^2(1-z)^2y''' + 3z(1-z)((N-1)z-N)y''\\
  &+ N((2N-5)z^2+(2-5N)z+2N+1)y'\\\nonumber
  &+ N((2N+1)z+N^2+N+1)y\nonumber
\end{align}
with $y=f(\vec v;z)$.
}}
\medskip

Finally, we turn to the problem of interpreting the meaning of the entries of $\vec v$.
Motivated by Connes and Moscovici's recent result \cite{CM}, one might hope that our prolate functions could be linked to some version of a zeta in a well-chosen finite context.
Over a finite field $\mathbb F_p$, the Hasse-Weil zeta function of an algebraic curve is related to the number of points on the curve over algebraic extensions of $\mathbb F_p$.
Likewise, our third main theorem links our generating function expression to the number of points on an elliptic curve over the finite field $\mathbb F_p$.
\smallskip

\noindent
{\bf{Theorem C.}} {\em{
Let $N=p-1$ for an odd prime $p$.  Then the eigenvector $\vec v$ of the $p\times p$ Pascal matrix $T_N$ from Theorem A satisfies
\begin{equation}
f(\vec v; z) = \sum_{k=0}^N v_kz^{N-k} \equiv (\#E_z(\mathbb F_p)-1)^2\mod p,
\end{equation}
where here $\#E_z(\mathbb F_p)$ is the number of $\mathbb F_p$-points on the elliptic curve $E_z$ in the Legendre family of curves 
$$E_z: y^2 = x(x-1)(x-z).$$
}}
\medskip

This theorem follows from a Pfaffian transformation and the equality
$$\pFq{2}{1}{-N/2,N/2+1}{-N}{z} \equiv \pPq{2}{1}{\phi,\phi}{-}{z;p}\mod p$$
where $\pPq{2}{1}{\phi,\phi}{-}{z;p}$ is the period function 
$$\pPq{2}{1}{\phi,\phi}{-}{z;p} = \sum_{x\in \mathbb F_p} \phi(x(x-1)(x-z))$$
for $\phi(\cdot) = \left(\frac{\cdot}{p}\right)$ the Legendre symbol \cite{F}.

\subsection{A deeper $p$-adic picture}
The connection between prolate spheroidal functions in signal processing and number theory presented by Theorem C is surprising at first glance.
We can find a deeper explanation if we consider evaluating our generating function on $p$-adic numbers for an odd prime $p$.

Consider the series
\begin{equation}
F(z) = \pFq{2}{1}{1/2,1/2}{1}{z} = \sum_{k=0}^\infty \binom{2k}{k}^2\frac{z^k}{8^k}.
\end{equation}
For $z$ in the complex unit disk, this series is equal to a period of the Legendre elliptic curve $E_z$.
Moreover, the series converges $p$-adically for $z\in \mathbb Q_p$ with $|z|_p < 1$ and is a $p$-adic solution of the associated hypergeometric differential equation \cite{dwork,kedlaya}
\begin{equation}\label{eqn:p-adic diff}
z(1-z)f''(z) + (1-2z)f'(z) - \frac{1}{4}f(z) = 0.
\end{equation}

Let $N_n = p^n-1$. 
By comparing coefficients of the series, it is clear that the generating function of the eigenvector of the $(N_n+1)\times (N_n+1)$ symmetric Pascal matrix $T_{N_n}$ found in Theorem A, ie.
$$U_n(z) = \pFq{2}{1}{-N_n/2,N_n/2+1}{-N_n}{z}\cdot \pFq{2}{1}{-N_n/2,N_n/2+1}{-N_n}{\frac{z}{z-1}}(1-z)^{N_n/2}$$
satisfies
$$U_n(z) \equiv F(z)^2\mod p^n\quad\text{for all $z\in \mathbb Q_p$ with $|z|_p < 1$}.$$
Consequently we have a $p$-adic convergence
$$U_n(z) \rightarrow F(z)^2\quad\text{for all $z\in \mathbb Q_p$ with $|z|_p < 1$}.$$

There is a deeper conceptual reason this convergence occurs.
The generating function $U_n(z)$ is a solution of the third-order differential equation in Theorem B, with $\mu = (N^2+2N)/2$ and $N=N_n$.
In the limit as $n\rightarrow\infty$, this differential equation is the symmetric square of Equation \ref{eqn:p-adic diff} (see Definition \ref{defn: symm sq})
\begin{equation}\label{eqn:p-adic square}
z^2(1-z)^2f'''(z) + 3z(1-z)(1-2z)f''(z) + (1-7z(1-z))f'(z) - (1/2-z)f(z) = 0,
\end{equation}
whose solution space is spanned by products of solutions of Equation \ref{eqn:p-adic diff}.
Meanwhile, the integral operator in Theorem B converges to the Kummer transformation $T: f(z)\mapsto \frac{1}{1-z}f\left(\frac{1}{1-z}\right)$.
The transformation $T$ acts on solutions of Equation \ref{eqn:p-adic square} and is Frobenius-equivariant, so it must preserve the filtration of the solution space by Frobenius slopes.
In particular, the slope $0$ (ie. unit-root line) consists of eigenvectors of $T$.
Now since $U_n(z)$ is prolate, it must converge to a solution of Equation \ref{eqn:p-adic square} which is an eigenfunction of $T$, ie. something in the unit-root line.
Since the unit-root line of Equation \ref{eqn:p-adic square} of is spanned by $F(z)^2$ and $U_n(0) = F(0)^2 = 1$, we get that $U_n(z)$ must converge to $F(z)^2$.

\begin{remk}
The fact that $F(z)^2$ rather than $F(z)$ shows up here is natural, since in particular, it prevents the need for a square root in the Kummer transformation, allowing $T$ to have a local series representation $T_n$ near $z=0$.
\end{remk}

\subsection{A brief history of Pascal}
It is worth noting that the symmetric Pascal matrix $T_N$ has a long mathematical history.
According to Muir, F. Caldarera first considered $T_N$ and proved $\det(T_N)=1$ in 1871.  Rutishauser later proved that $T_N$ has a Cholesky decomposition in terms of the binomial transform \cite{newman}, giving a simpler proof of Calderara's theorem.
The behavior of the \emph{eigenvalues} of $T_N$ for $N=p^n-1$ modulo $p$ was studied by Strauss and Waterhouse in $1986-87$ \cite{strauss,waterhouse}, but no eigenvectors were found.  
In more modern works, properties of the Pascal matrix have been explored by Edelman and Strang \cite{Edelman} and Brawer and Pirovino \cite{brawer}, among others.

As far as we know, our paper is the first to obtain an explicit expression for any eigenvector of $T_N$.
In fact, \cite{brawer} was the first to point out that $T_N$ has a rational eigenvector with eigenvalue $1$, ie. that the diophantine system
$$\sum_{k=0}^N \binom{j+k}{k}v_k = v_j$$
has a nontrivial solution in $\mathbb{Q}$ (and hence $\mathbb{Z}$) when $N$ is even.  However, solutions to this system were given numerically only for $N=2,4,5,8$ and $10$, and until now no explicit formula for a solution was known.
Theorem A above provides the explicit solution
$$v_\ell = \sum_{\substack{j+k=\ell\\0\leq j,k\leq N/2}}\frac{(-N/2)_j(-N/2)_k(N/2+1)_j(-3N/2-1)_k}{j!k!(-N)_j(-N)_k},\quad 0\leq \ell\leq N,$$
where here $(q)_k = q(q+1)\dots(q+k-1)$ is the (rising) Pochhammer symbol.

\section{Generating functions of eigenvectors}
\subsection{Basic properties}
We start by proving some basic properties of the action of $T_N$ and $J_N$ on generating functions of vectors.
\begin{defn}
Let $\vec v = (v_k)_{k=0}^N\in\mathbb C^{N+1}.$
We define the \vocab{generating function} of $\vec v$ to be the polynomial
$$f(\vec v;z) = \sum_{k=0}^N v_kz^{N-k}.$$
\end{defn}

The property of a vector $\vec v$ being an eigenvector of $T_N$ translates directly to a certain functional equation on the generating function of $\vec v$ via the following lemma.

\begin{lem}\label{lem:T-action}
Let $\vec v = (v_k)_{k=0}^N\in\mathbb C^{N+1}.$
The generating function of $\vec v$ satisfies
$$f(T_N\vec v;z) = \left(\frac{z}{z-1}\right)^{N+1}z^Nf\left(\vec v; 1-\frac{1}{z}\right) + \frac{1}{z}\sum_{j=0}^Nv_j\binom{N+1+j}{j}\pFq{2}{1}{1,j+N+2}{N+2}{\frac{1}{z}}.$$
\end{lem}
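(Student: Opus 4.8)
The plan is to compute the generating function $f(T_N \vec v; z)$ directly from the definition by expanding the matrix-vector product. Writing $(T_N \vec v)_k = \sum_{j=0}^N \binom{j+k}{k} v_j$, I would substitute into $f(T_N \vec v; z) = \sum_{k=0}^N (T_N \vec v)_k z^{N-k}$ and interchange the order of summation to obtain
\begin{equation}
f(T_N \vec v; z) = \sum_{j=0}^N v_j \sum_{k=0}^N \binom{j+k}{k} z^{N-k}. \nonumber
\end{equation}
The entire problem then reduces to evaluating the inner sum $S_j(z) = \sum_{k=0}^N \binom{j+k}{k} z^{N-k}$ in closed form, since the right-hand side of the claimed identity is precisely a reorganization of $\sum_j v_j S_j(z)$.

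The key step is recognizing $S_j(z)$ as a truncated binomial-series sum. The generating identity $\sum_{k=0}^\infty \binom{j+k}{k} t^k = (1-t)^{-(j+1)}$ is the natural starting point, but here the sum is truncated at $k=N$, so I expect the answer to split into a clean closed-form ``principal'' term coming from the full geometric-type series and a ``tail correction'' involving the truncation. Pulling out $z^N$ and setting $t = 1/z$, one has $S_j(z) = z^N \sum_{k=0}^N \binom{j+k}{k} z^{-k}$. The tail $\sum_{k>N}$ of the full series is exactly what produces a hypergeometric ${}_2F_1$ evaluated at $1/z$: reindexing $k = N+1+\ell$ and using $\binom{j+N+1+\ell}{N+1+\ell}$, the ratio of consecutive terms is rational in $\ell$, giving the Gauss ${}_2F_1\!\left[\genfrac..{0pt}{}{1,\,j+N+2}{N+2};\tfrac{1}{z}\right]$ with the prefactor $\binom{N+1+j}{j}$ matching the leading tail coefficient. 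The full (untruncated) part $z^N(1-1/z)^{-(j+1)} = z^N\left(\frac{z-1}{z}\right)^{-(j+1)} = \left(\frac{z}{z-1}\right)^{j+1} z^{N-j}$ must then be reassembled across all $j$ into the first term on the right-hand side.

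The main obstacle will be verifying that the untruncated contribution $\sum_j v_j \left(\frac{z}{z-1}\right)^{j+1} z^{N-j}$ collapses exactly into $\left(\frac{z}{z-1}\right)^{N+1} z^N f\!\left(\vec v; 1 - \tfrac{1}{z}\right)$. This is the bookkeeping heart of the lemma: I would factor out $\left(\frac{z}{z-1}\right)^{N+1} z^N$ and check that the residual sum is $\sum_j v_j \left(\frac{z-1}{z}\right)^{N-j}\cdot\left(\text{something}\right)$, so that setting $w = 1 - 1/z = \frac{z-1}{z}$ recovers $f(\vec v; w) = \sum_j v_j w^{N-j}$. The exponent arithmetic $\left(\frac{z}{z-1}\right)^{j+1} z^{N-j} = \left(\frac{z}{z-1}\right)^{N+1} z^N \cdot \left(\frac{z-1}{z}\right)^{N-j}$ is the identity that makes this work, and confirming it is purely algebraic.

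Finally I would handle the extra factor of $\tfrac{1}{z}$ in front of the ${}_2F_1$ term, which arises naturally from writing $z^N \cdot z^{-(N+1)} = z^{-1}$ when the tail sum begins at $k = N+1$. The strategy throughout is to treat $z$ as a formal variable so that the manipulations with $(1-t)^{-(j+1)}$ are purely formal power-series identities; convergence is not an issue since everything is ultimately a finite sum of rational functions in $z$. I expect no conceptual difficulty beyond careful index tracking, with the one genuine verification being the reindexing that identifies the tail with the stated Gauss hypergeometric function.
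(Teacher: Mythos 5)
Your proposal follows exactly the paper's own proof: interchange the order of summation, complete the inner sum $\sum_{k=0}^N\binom{j+k}{j}z^{N-k}$ to the full binomial series $z^N(1-1/z)^{-j-1}$ minus its tail, identify the tail (after reindexing $k=N+1+\ell$) with $z^{-1}\binom{N+1+j}{j}\pFq{2}{1}{1,j+N+2}{N+2}{1/z}$, and reassemble the principal part into $\left(\tfrac{z}{z-1}\right)^{N+1}z^N f\left(\vec v;1-\tfrac1z\right)$. One caution: carrying this out yields the tail term with a \emph{minus} sign (as in the final line of the paper's own proof, and as the case $N=0$ confirms), so the $+$ in the displayed statement of the lemma is a sign typo that your computation will surface rather than reproduce.
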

\begin{proof}
From the binomial series
\begin{align*}
f(T_N\vec v; z)
  & = \sum_{k=0}^N \sum_{j=0}^N \binom{j+k}{j}v_jz^{N-k}\\
  & = \sum_{j=0}^N v_j \left(\sum_{k=0}^\infty \binom{j+k}{j}z^{N-k} - \sum_{k=N+1}^\infty \binom{j+k}{j}z^{N-k}\right)\\
  & = \sum_{j=0}^N v_j \left[z^N\left(1-\frac{1}{z}\right)^{-j-1} -  z^{-1}\binom{j+N+1}{j}\pFq{2}{1}{1,j+N+2}{N+2}{z^{-1}}\right]\\
  & = \frac{z^{2N+1}}{\left(z-1\right)^{N+1}}f(\vec v; 1-1/z) -  z^{-1}\sum_{j=0}^Nv_j\binom{j+N+1}{j}\pFq{2}{1}{1,j+N+2}{N+2}{z^{-1}}.
\end{align*}
\end{proof}

As an immediate consequence, we can reframe the search for eigenvectors of $T_N$ in terms of a certain residue integral eigenvalue problem.
\begin{thm}\label{thm:int eqn}
Let $\vec v\in\mathbb C^{N+1}$.  Then
$\vec v$ is an eigenvector of $T_N$ with eigenvalue $\lambda$ if and only if
$$\frac{1}{2\pi i}\int_{\text{Re}(w)=\frac{1}{2}} \frac{1}{w^{N+1}(1-w)^{N+1}(1-z+zw)}f\left(\vec v; w\right) dw = \lambda f(\vec v;z).$$
\end{thm}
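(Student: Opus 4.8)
The plan is to reduce the claimed equivalence to the single fact that the residue integral on the left reproduces the generating function $f(T_N\vec v;z)$ of the transformed vector, and then invoke Lemma \ref{lem:T-action}. Since $\vec v\mapsto f(\vec v;z)$ is a linear isomorphism from $\mathbb C^{N+1}$ onto the space of polynomials of degree at most $N$, the condition $T_N\vec v=\lambda\vec v$ is equivalent to the polynomial identity $f(T_N\vec v;z)=\lambda f(\vec v;z)$. Hence it suffices to show that the integral equals $f(T_N\vec v;z)$.

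First I would set up the residue calculation. The integrand $\frac{f(\vec v;w)}{w^{N+1}(1-w)^{N+1}(1-z+zw)}$ is rational in $w$ with numerator of degree at most $N$ and denominator of degree $2N+3$, so it decays like $w^{-N-3}$ and the contour along $\mathrm{Re}(w)=\tfrac12$ may be closed by a large semicircle on either side without changing the value. Its poles are the fixed points $w=0$ (order $N+1$, to the left of the line) and $w=1$ (order $N+1$, to the right), together with the simple moving pole $w_*=1-\tfrac1z$ produced by the factor $1-z+zw$. A direct computation gives
\[
\operatorname*{Res}_{w=w_*}\frac{f(\vec v;w)}{w^{N+1}(1-w)^{N+1}(1-z+zw)}=\frac{z^{2N+1}}{(z-1)^{N+1}}\,f\!\left(\vec v;1-\tfrac1z\right),
\]
which is exactly the first term appearing in Lemma \ref{lem:T-action}.

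The decisive point is to locate the moving pole. For $z$ in the disk $|z-1|<1$, equivalently $\mathrm{Re}(1/z)>\tfrac12$, we have $\mathrm{Re}(w_*)<\tfrac12$, so $w_*$ lies to the left of the contour and the only pole to the right is $w=1$. Closing to the right then gives the integral as $-\operatorname*{Res}_{w=1}$ of the integrand; substituting $w\mapsto 1-u$ and extracting the coefficient of $u^N$ turns this residue into the binomial convolution $-\sum_{a=0}^N\big(\sum_k\binom{a+k}{k}v_k\big)z^{N-a}=-f(T_N\vec v;z)$, so the integral equals $f(T_N\vec v;z)$. Equivalently, closing to the left captures $w=0$ and $w_*$, whose residues reproduce respectively the hypergeometric tail and the leading term of Lemma \ref{lem:T-action}. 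Either way, the integral agrees with the polynomial $f(T_N\vec v;z)$ throughout $|z-1|<1$, so the equation $\text{integral}=\lambda f(\vec v;z)$ holds on this set if and only if $f(T_N\vec v;z)=\lambda f(\vec v;z)$ as polynomials, which is equivalent to $T_N\vec v=\lambda\vec v$.

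The main obstacle is the bookkeeping around the moving pole $w_*=1-\tfrac1z$: one must work in (or analytically continue from) the region $|z-1|<1$ where $w_*$ sits on a definite side of the line, and track orientation signs carefully, since in the complementary region the extra residue at $w_*$ is picked up and the bare integral no longer equals $f(T_N\vec v;z)$. The only other nontrivial step is the evaluation of the order-$(N+1)$ residue at $w=1$, which requires the substitution $w\mapsto 1-u$ together with the convolution $\sum_{a+b=N}\binom{k+a}{a}z^{b}$ to recognize the Pascal action $(T_N\vec v)_a=\sum_k\binom{a+k}{k}v_k$.
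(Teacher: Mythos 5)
Your proof is correct, but it evaluates the contour integral from the opposite side compared with the paper. The paper starts from Lemma \ref{lem:T-action}, observes that $f(T_N\vec v;z)$ must be the polynomial part of $\frac{z^{2N+1}}{(z-1)^{N+1}}f\left(\vec v;1-\frac{1}{z}\right)$, realizes that polynomial part as a Cauchy integral over $|u-1|=1$ with kernel $\frac{1}{u-z}-\sum_{k=0}^N\frac{(z-1)^k}{(u-1)^{k+1}}$, and then substitutes $w=1-1/u$ to land on the critical line; in your language this amounts to closing the contour to the \emph{left} and collecting the residues at $w=0$ and $w_*=1-1/z$, exactly the two terms of that lemma. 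You instead close to the \emph{right}, so everything reduces to the single residue at $w=1$, which you compute directly as the coefficient of $u^N$ in $f(\vec v;1-u)(1-u)^{-N-1}(1-zu)^{-1}$ and recognize as $-f(T_N\vec v;z)$ via the binomial convolution $\sum_{a+b=N}\binom{a+k}{k}z^b$. This makes your argument self-contained --- neither Lemma \ref{lem:T-action} nor the polynomial-part bookkeeping is needed --- at the cost of re-deriving the Pascal action inside the residue computation. Your explicit restriction to $|z-1|<1$ (equivalently $\mathrm{Re}(w_*)<\tfrac12$), where the moving pole sits strictly to the left of the contour, addresses a point the paper leaves implicit (there it is the condition that $u=z$ lie inside $|u-1|=1$); as you note, since both sides of the final identity are polynomials in $z$, validity on that disk suffices.
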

\begin{proof}
Since $f(T_N\vec v; z)$ is a polynomial, the previous Lemma tells us it will be equal to the polynomial part of
$$\frac{z^{2N+1}}{(z-1)^{N+1}}f\left(\vec v;1-\frac{1}{z}\right).$$
Thus by Cauchy's residue theorem 
$$f(T_N\vec v;z)
   = \frac{1}{2\pi i(z-1)^{N+1}}\oint_{|u-1|=1} \left(\frac{1}{u-z} - \sum_{k=0}^{N}\frac{(z-1)^k}{(u-1)^{k+1}}\right)u^{2N+1}f\left(\vec v; 1-\frac{1}{u}\right) du.$$
Calculating the geometric sum and using the change of variables $w=1-1/u$, we get
\begin{align*}
f(T_N\vec v;z)
  & = \frac{1}{2\pi i}\oint_{|u-1|=1} \left(\frac{1}{u-1}\right)^{N+1}\frac{u^{2N+1}}{u-z}f\left(\vec v; 1-\frac{1}{u}\right) du\\
  & = \frac{1}{2\pi i}\int_{\text{Re}(w)=\frac{1}{2}} \frac{1}{w^{N+1}(1-w)^{N+1}(1-z+zw)}f\left(\vec v; w\right) dw.
\end{align*}
The statement of the theorem follows immediately.
\end{proof}

Likewise, the property of a vector $\vec v$ being an eigenvector of $J_N$ translates directly into a property of the generating function of $\vec v$.  This time, we get that $f(\vec v;z)$ is a polynomial eigenfunction of a certain third-order differential equation.

\begin{thm}\label{thm:diff eqn}
A vector $\vec v\in \mathbb C^{N+1}$ is an eigenvector of $J$ with eigenvalue $\mu$ if and only if $y = f(\vec v; z)$ is a solution of
\begin{align*}
\mu y
  &= z^2(1-z)^2y''' + 3z(1-z)((N-1)z-N)y''\\
  &+ N((2N-5)z^2+(2-5N)z+2N+1)y'\\
  &+ N((2N+1)z+N^2+N+1)y
\end{align*}
\end{thm}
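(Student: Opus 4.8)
The plan is to convert the eigenvalue equation $J_N\vec v=\mu\vec v$ into an operator identity on the generating function and then to recognize that operator as the third-order operator in the statement. First I would write the eigenvalue equation componentwise. Since $J_N$ is tridiagonal, $J_N\vec v=\mu\vec v$ is equivalent to the three-term recurrence
\[
a(n)v_{n-1}+b(n)v_n+a(n+1)v_{n+1}=\mu v_n,\qquad 0\le n\le N,
\]
under the conventions $v_{-1}=v_{N+1}=0$. The crucial observation is that $a(0)=(N+1)^2\cdot 0-0^3=0$ and $a(N+1)=(N+1)^2(N+1)-(N+1)^3=0$, so the boundary terms vanish automatically and no extra boundary conditions enter the translation.

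Next I would set $D=z\frac{d}{dz}$, so that $Dz^{N-k}=(N-k)z^{N-k}$ and hence $g(N-D)z^{N-k}=g(k)z^{N-k}$ for every polynomial $g$. For an arbitrary $\vec v$ the $n$-th entry of $J_N\vec v$ is $a(n)v_{n-1}+b(n)v_n+a(n+1)v_{n+1}$, so multiplying by $z^{N-n}$, summing over $n$, and reindexing the shifted sums (using $a(0)=a(N+1)=0$ to extend or truncate the ranges harmlessly) yields
\[
f(J_N\vec v;z)=\big[z^{-1}a(N-D+1)+b(N-D)+z\,a(N-D)\big]f(\vec v;z)=:L\,f(\vec v;z).
\]
Because $a(N+1)=0$ kills the constant term of $a(N-D+1)f(\vec v;z)$ before the division by $z$, and $a(0)=0$ removes the degree-$N$ term of $a(N-D)f(\vec v;z)$ before multiplication by $z$, the operator $L$ maps the space $P_N$ of polynomials of degree $\le N$ into itself. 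Since $\vec v\mapsto f(\vec v;z)$ is a linear isomorphism $\mathbb{C}^{N+1}\xrightarrow{\ \sim\ }P_N$ and the displayed identity shows it intertwines $J_N$ with $L$, the eigenvector condition $J_N\vec v=\mu\vec v$ is equivalent to $L\,f(\vec v;z)=\mu f(\vec v;z)$, which is exactly the assertion once $L$ is identified with the stated differential operator.

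It then remains to verify that $L$ is literally the third-order operator on the right-hand side of the statement. I would substitute $a(n)=(N+1)^2n-n^3$ and $b(n)=2n^3+3n^2+2n-(N+1)^2n$, expand $a(N-D+1)$, $a(N-D)$, and $b(N-D)$ as cubics in $D$, and then rewrite powers of $D$ via $D=z\partial$, $D^2=z^2\partial^2+z\partial$, $D^3=z^3\partial^3+3z^2\partial^2+z\partial$, carrying the $z^{\pm 1}$ prefactors through and collecting by powers of $z$. This expansion is the main obstacle: it is lengthy but entirely routine, and its only delicate feature is the cancellation of the apparent top-degree contribution, which both confirms that $L$ preserves $P_N$ and pins down the coefficients. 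For instance the leading symbol assembles as $z^2-2z^3+z^4=z^2(1-z)^2$, matching the stated $\partial^3$-coefficient, while the coefficient of $z^{N+1}$ in $L z^N$ is $N(N-1)(N-2)-3N(N-1)^2+N^2(2N-5)+N(2N+1)=0$, so no term of degree exceeding $N$ survives and the remaining coefficients reproduce the operator as written.
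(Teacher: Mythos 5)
Your proposal is correct and follows essentially the same route as the paper: both rewrite $f(J_N\vec v;z)$ as $\bigl(z^{-1}a(N+1-z\partial_z)+b(N-z\partial_z)+za(N-z\partial_z)\bigr)f(\vec v;z)$ using $a(0)=a(N+1)=0$ and then identify this operator with the stated third-order one by direct expansion. Your additional remarks (that the operator preserves polynomials of degree $\le N$, that $\vec v\mapsto f(\vec v;z)$ is an isomorphism intertwining $J_N$ with $L$, and the spot-checks of the $\partial^3$-symbol and the vanishing $z^{N+1}$ coefficient) are accurate and only make explicit what the paper leaves implicit.
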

\begin{proof}
Let $\vec v\in\mathbb C^{N+1}$ and let 
$$a(z) = (N+1)^2z-z^3,\quad\text{and}\quad b(z) = 2z^3+3z^2+2z-(N+1)^2z$$
be the polynomials definining the structure of the Jacobi matrix $J_N$ in Equation \eqref{eqn:jacobi}.
Then since $a(N+1)=0$ and $a(0)=0$,
\begin{align*}
f(J\vec v;z)
  & = \sum_{k=0}^N v_k (a(k+1)z^{N-k-1} + b(k)z^{N-k} + a(k)z^{N-k+1})\\
  & = \sum_{k=0}^N v_k (a(k+1)z^{N-k-1} + b(k)z^{N-k} + a(k)z^{N-k+1})\\
  & = \sum_{k=0}^N v_k (z^{-1}a(N-z\partial_z+1)z^{N-k} + b(N-z\partial_z)z^{N-k} + za(N-z\partial_z)z^{N-k})\\
  & = (z^{-1}a(N+1-z\partial_z)+ b(N-z\partial_z) + za(N-z\partial_z)) \cdot f(\vec v; z)
\end{align*}
The rest of the theorem follows from explicit calculation of the operator 
$$z^{-1}a(N+1-z\partial_z)+ b(N-z\partial_z) + za(N-z\partial_z).$$
\end{proof}

Combining the two previous theorems, the statement of Theorem B readily follows.
\begin{proof}[Proof of Theorem B]
Suppose that $\vec v$ is an eigenvector of $J_N$ with eigenvalue $\mu$.
Then since $J_N$ is a Jacobi matrix, it must have simple spectrum.
Since $J_N$ and $T_N$ commute, it follows that $\vec v$ is also an eigenvector of $T_N$ for some eigenvalue $\lambda$.
The statement of Theorem B then follows automatically from Theorem \ref{thm:diff eqn} and Theorem \ref{thm:int eqn}.
\end{proof}

\subsection{Eigenvectors and the binomial transform}
The symmetric pascal matrix $T_N$ has the Cholesky decomposition
$$T_N = B_NB_N^*,$$
where here $B_N$ is the $(N+1)\times (N+1)$ \vocab{binomial transform}
$$(B_N\vec v)_j = \sum_{k=0}^N(-1)^k\binom{j}{k}v_k,\quad 0\leq j\leq N.$$
The binomial transform is involutory and conjugates $T_N$ to $T_N^{-1}$.  Consequently $\lambda$ is an eigenvalue of $T_N$ if and only if $\lambda^{-1}$ is an eigenvalue of $T_N$.  Moreover, the binomial transform defines an isomorphism between the associated eigenspaces \cite{CZ}
$$\xymatrix{
E_\lambda(T_N) \ar@/^1pc/[r]^B & E_{1/\lambda}(T_N) \ar@/^1pc/[l]^B
}.$$
This symmetry of the eigendata translates to some properties of the corresponding generating functions.
This is made explicit in the next lemma.
\begin{lem}
Let $\vec v = (v_k)_{k=0}^N\in\mathbb C^{N+1}.$
The generating function of $\vec v$ satisfies
$$f(B_N^*\vec v;z) = (z-1)^Nf\left(\vec v; \frac{z}{z-1}\right)$$
$$f(B_N\vec v;z) = \left(\frac{z}{z-1}\right)^{N+1}f(\vec v; 1-z) - z^{-1}\sum_{j=0}^N v_j(-1)^j\binom{N+1}{j}\pFq{2}{1}{1,N+2}{N+2-j}{\frac{1}{z}}.$$
\end{lem}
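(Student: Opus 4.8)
The plan is to treat both identities exactly as in the proof of Lemma~\ref{lem:T-action}: substitute the definition of the binomial transform, interchange the two summations, and then evaluate the resulting inner sum either by the finite binomial theorem or by splitting off a convergent tail.

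For the first identity I would use that $(B_N^*)_{jk} = (-1)^j\binom{k}{j}$, so that after interchanging summations
\[
f(B_N^*\vec v; z) = \sum_{k=0}^N v_k\sum_{j=0}^{k}(-1)^j\binom{k}{j}z^{N-j} = \sum_{k=0}^N v_k z^{N}\Big(1-\tfrac{1}{z}\Big)^{k} = \sum_{k=0}^N v_k z^{N-k}(z-1)^k.
\]
Here the inner sum is automatically finite, since $\binom{k}{j}$ vanishes for $j>k$, so the finite binomial theorem applies directly and no tail correction is needed. Recognizing $(z-1)^k z^{N-k} = (z-1)^N\big(z/(z-1)\big)^{N-k}$ then gives the stated closed form $(z-1)^N f(\vec v; z/(z-1))$.

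The second identity is the substantive one. Using $(B_N\vec v)_j=\sum_k (-1)^k\binom{j}{k}v_k$ and interchanging summations yields
\[
f(B_N\vec v; z) = \sum_{k=0}^N (-1)^k v_k\sum_{j=k}^{N}\binom{j}{k}z^{N-j}.
\]
Now $\binom{j}{k}$ does not truncate the inner sum from above, so---exactly as in Lemma~\ref{lem:T-action}---I would write $\sum_{j=k}^{N}=\sum_{j=k}^{\infty}-\sum_{j=N+1}^{\infty}$, the series being convergent for $|z|>1$. The full series evaluates by the negative binomial theorem $\sum_{m\ge 0}\binom{m+k}{k}x^m=(1-x)^{-k-1}$ (with $x=\frac{1}{z}$) to the rational function $z^{N-k}\big(z/(z-1)\big)^{k+1}=z^{N+1}/(z-1)^{k+1}$, while for the tail I would reindex $j=i+N+1$ and use the Pochhammer identity
\[
\binom{i+N+1}{k}=\binom{N+1}{k}\frac{(N+2)_i}{(N+2-k)_i},
\]
together with $(1)_i/i!=1$, to recognize $\sum_{i\ge 0}\binom{i+N+1}{k}z^{-i}=\binom{N+1}{k}\,\pFq{2}{1}{1,N+2}{N+2-k}{\frac{1}{z}}$. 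Summing against $(-1)^kv_k$ (with the dummy index relabeled $k\mapsto j$) produces precisely the hypergeometric correction in the statement.

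The main obstacle is the careful bookkeeping of signs and powers needed to resum the rational leading term $\sum_{k=0}^N(-1)^k v_k\, z^{N+1}/(z-1)^{k+1}$ into the stated form $\big(z/(z-1)\big)^{N+1}f(\vec v;1-z)$: one must track the factors $(z-1)^{-k-1}$ against the reindexing of $f(\vec v;1-z)=\sum_k v_k(1-z)^{N-k}$, where the interplay of $z-1$ and $1-z$ governs the overall sign. Since both sides are equalities of analytic functions on $|z|>1$ and $f(B_N\vec v;z)$ is a polynomial, the identity then extends to all $z$, completing the argument.
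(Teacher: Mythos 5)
Your approach is the same as the paper's: interchange the order of summation, apply the finite binomial theorem for the $B_N^*$ identity, and for the $B_N$ identity extend the inner sum to an infinite series (negative binomial theorem) and subtract the tail, which you correctly recognize as the stated ${}_2F_1$ via the identity $\binom{i+N+1}{k}=\binom{N+1}{k}(N+2)_i/(N+2-k)_i$. Every step you actually carry out checks out.

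The one step you defer --- resumming $\sum_{k}(-1)^k v_k\, z^{N+1}(z-1)^{-k-1}$ into $\left(z/(z-1)\right)^{N+1}f(\vec v;1-z)$ --- is exactly where the only subtlety lives, and you need to finish it: writing the sum as $z^{N+1}(z-1)^{-N-1}\sum_k v_k(-1)^k(z-1)^{N-k}$ and comparing with $f(\vec v;1-z)=\sum_k v_k(-1)^{N-k}(z-1)^{N-k}$ leaves an extra factor $(-1)^N$. So the leading term is actually $(-1)^N\left(z/(z-1)\right)^{N+1}f(\vec v;1-z)$, and the second displayed identity as printed holds only for $N$ even: for $N=1$ and $\vec v=(1,0)$ one computes $f(B_1\vec v;z)=z+1$, while the printed right-hand side evaluates to $-(z^2+1)/(z-1)$. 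The paper's own proof silently drops the same sign (its second displayed line carries a spurious $(-1)^j$ that cancels against the outer one). Since the lemma is only used with $N$ even this is harmless downstream, but your write-up should either carry the $(-1)^N$ explicitly or record the parity restriction rather than leaving the bookkeeping as an acknowledged obstacle.
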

\begin{proof}
From the binomial theorem,
\begin{align*}
f(B_N^*\vec v; z)
  & = \sum_{k=0}^N \sum_{j=0}^N (-1)^k\binom{j}{k}v_jz^{N-k}\\
  & = \sum_{j=0}^N v_j z^{N-j}\sum_{k=0}^N \binom{j}{k}(-1)^kz^{j-k}\\
  & = \sum_{j=0}^N v_j z^{N-j}(z-1)^j = (z-1)^Nf\left(\vec v;\frac{z}{z-1}\right).
\end{align*}
Also from binomial series,
\begin{align*}
f(B_N\vec v; z)
  & = \sum_{k=0}^N \sum_{j=0}^N (-1)^j\binom{k}{j}v_jz^{N-k}\\
  & = \sum_{j=0}^N v_j(-1)^j\left(\sum_{k=0}^\infty\binom{k}{j}z^{N-k} - (-1)^j\sum_{k=N+1}^\infty\binom{k}{j}z^{N-k} \right)\\
  & = \sum_{j=0}^N v_j(-1)^j\left(z^{N+1}\frac{1}{z-1}(z-1)^{-j} - \sum_{k=N+1}^\infty\binom{k}{j}z^{N-k} \right)\\
  & = \left(\frac{z}{z-1}\right)^{N+1}f(\vec v,1-z) - z^{-1}\sum_{j=0}^N v_j(-1)^j\binom{N+1}{j}\pFq{2}{1}{1,N+2}{N+2-j}{z^{-1}}.
\end{align*}
\end{proof}

Using the previous lemma, we can relate the generating function of an eigenvector $\vec v$ to the generating function of $B_N\vec v$.
\begin{thm}
Let $\vec v\in\mathbb C^{N+1}$ be an eigenvector of $T_N$ with eigenvalue $\lambda$.
Then
$$\lambda f(\vec v;z) = (z-1)^Nf\left(B_N\vec v; \frac{z}{z-1}\right).$$
\end{thm}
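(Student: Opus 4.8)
The plan is to derive the identity directly from the generating-function formula for $B_N^*$ established in the lemma above, combined with the Cholesky factorization $T_N = B_N B_N^*$ and the fact that $B_N$ is involutory. The one genuinely new observation is that the M\"obius transformation $\sigma(z) = z/(z-1)$ occurring in that lemma is an involution, $\sigma(\sigma(z)) = z$, satisfying $\sigma(z) - 1 = (z-1)^{-1}$; this is what allows the lemma to be ``inverted.''

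First I would recast the eigenvector hypothesis. Since $T_N = B_N B_N^*$ and $B_N^2 = \Id$ (so $B_N^{-1} = B_N$), left-multiplying $T_N \vec v = \lambda \vec v$ by $B_N$ shows that the hypothesis is equivalent to
$$B_N^* \vec v = \lambda B_N \vec v.$$
As $f(\,\cdot\,; z)$ is linear in its vector argument, this yields the scalar relation $f(B_N^* \vec v; z) = \lambda f(B_N \vec v; z)$.

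Next I would feed this into the lemma. Applying the identity $f(B_N^* \vec w; z) = (z-1)^N f(\vec w; \sigma(z))$ to the vector $\vec w = B_N \vec v$ rewrites the right-hand side $(z-1)^N f(B_N \vec v; \sigma(z))$ of the target as $f(B_N^* B_N \vec v; z)$; equivalently, substituting $z \mapsto \sigma(z)$ into the lemma and using $\sigma \circ \sigma = \id$ together with $\sigma(z)-1 = (z-1)^{-1}$ expresses $f(\vec v; z)$ through $f(B_N^* \vec v; \sigma(z))$. Either route, once combined with the recast eigenvector relation of the previous step, collapses everything to $f(\vec v; z)$ multiplied by the appropriate eigenvalue factor, which is the claimed identity.

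The only real subtlety, and the place where I expect the argument to need care, is the eigenvalue bookkeeping. The transform $B_N$ interchanges the eigenspaces $E_\lambda(T_N)$ and $E_{1/\lambda}(T_N)$ (equivalently $B_N^* B_N = T_N^{-1}$), so the scalar multiplying $f(\vec v; z)$ after the substitution is governed by the eigenvalue of $\vec v$ and the direction of this swap. Tracking this factor correctly, together with the cocycle factor $(z-1)^N$ produced by the M\"obius involution, is exactly where sign and inversion errors arise; the remaining manipulations are routine substitutions requiring no new estimates.
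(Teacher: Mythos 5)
Your route is the same one the paper takes: both arguments rest on the lemma $f(B_N^*\vec w;z)=(z-1)^Nf(\vec w;z/(z-1))$ applied to $\vec w=B_N\vec v$, together with the factorization $T_N=B_NB_N^*$ and $B_N^2=\Id$. However, the one point you defer --- the ``eigenvalue bookkeeping'' --- is in fact the crux, and if you carry it out your argument proves
$$\lambda^{-1}f(\vec v;z)=(z-1)^Nf\left(B_N\vec v;\tfrac{z}{z-1}\right),$$
not the identity with $\lambda$ as printed. Indeed, from $B_NT_N=B_NB_NB_N^*=B_N^*$ you get $f(B_N^*\vec v;z)=\lambda f(B_N\vec v;z)$; the lemma gives $f(B_N^*\vec v;z)=(z-1)^Nf(\vec v;\sigma(z))$ with $\sigma(z)=z/(z-1)$; and substituting $z\mapsto\sigma(z)$ and using $\sigma(z)-1=(z-1)^{-1}$ yields the display above. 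Equivalently, $(z-1)^Nf\left(B_N\vec v;\tfrac{z}{z-1}\right)=f(B_N^*B_N\vec v;z)$ and, exactly as you observe, $B_N^*B_N=T_N^{-1}$ while $B_NB_N^*=T_N$, so the right-hand side equals $f(T_N^{-1}\vec v;z)=\lambda^{-1}f(\vec v;z)$. A check at $N=1$ with $\lambda=(3+\sqrt5)/2$ and $\vec v=(1,(1+\sqrt5)/2)$ confirms this: the polynomial $(z-1)f(B_1\vec v;z/(z-1))$ has leading coefficient $1/\lambda$, not $\lambda$. The paper's own proof writes $f(T_N\vec v;z)=f(B_N^*B_N\vec v;z)$, silently using $T_N=B_N^*B_N$, which contradicts the stated Cholesky factorization; so the printed statement needs $\lambda$ replaced by $\lambda^{-1}$ (harmless in the main application, where $\lambda=1$, and equivalent to the printed form after swapping $\vec v$ with $B_N\vec v$). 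Your proposal is structurally sound and correctly isolates the delicate step, but to constitute a proof you must actually pin down the constant rather than leaving it as ``the appropriate eigenvalue factor'' --- and doing so forces the correction above.
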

\begin{proof}
Using the previous lemma with $B_N\vec v$ in place of $\vec v$, we find
$$\lambda f(\vec v; z) = f(T_N\vec v; z) = f(B_N^*B_N\vec v;z) = (z-1)^Nf\left(B_N\vec v; \frac{z}{z-1}\right).$$
\end{proof}

The binomial transform also permutes the eigenvectors of $J_N$ \cite{CZ}, and specifically interchanges the eigenspaces with eigenvalue $\lambda$ and $N^2+2N-\lambda$
$$\xymatrix{
E_\lambda(J_N) \ar@/^1pc/[r]^B & E_{N^2+2N-\lambda}(T_N) \ar@/^1pc/[l]^B
}.$$

As a consequence, when $N$ is even $J_N$ has an eigenvector with eigenvalue $\frac{N^2+2N}{2}$.
This eigenvector is necessarily an eigenvector of $T_N$ with eigenvalue $1$.
\begin{thm}\label{thm:which one}
Let $N$ be even.
Then $(N^2+2N)/2$ is an eigenvalue of $J_N$ and $$E_{(N^2+2N)/2}(J_N)\subseteq E_1(T_N).$$
\end{thm}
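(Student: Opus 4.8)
The plan is to combine three ingredients already available in the excerpt: the simplicity of the spectrum of the Jacobi matrix $J_N$, the fact that the binomial transform $B_N$ is an involution interchanging $E_\lambda(J_N)$ with $E_{N^2+2N-\lambda}(J_N)$, and the positive-definiteness of $T_N$ coming from its Cholesky decomposition $T_N=B_NB_N^*$.

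First I would establish that $(N^2+2N)/2$ actually occurs as an eigenvalue of $J_N$ by a parity argument. Since $J_N$ is an $(N+1)\times(N+1)$ Jacobi matrix with nonzero off-diagonal entries $a(n)$, it has simple spectrum, so its $N+1$ eigenvalues are distinct. Because $B_N$ maps $E_\lambda(J_N)$ isomorphically onto $E_{N^2+2N-\lambda}(J_N)$ and every eigenspace is one-dimensional, $B_N$ induces the involution $\lambda\mapsto N^2+2N-\lambda$ on the spectrum of $J_N$. This involution decomposes the $(N+1)$-element spectrum into fixed points and transpositions; the transpositions account for an even number of eigenvalues, so the number of fixed points has the same parity as $N+1$. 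As $N$ is even, $N+1$ is odd, forcing at least one fixed point. The only value fixed by $\lambda\mapsto N^2+2N-\lambda$ is $\lambda=(N^2+2N)/2$, so this is an eigenvalue of $J_N$, and by simplicity it has a one-dimensional eigenspace.

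Next I would pin down the corresponding $T_N$-eigenvalue. Let $\vec v$ span $E_{(N^2+2N)/2}(J_N)$. As in the proof of Theorem B, the commutativity of $J_N$ and $T_N$ together with the simplicity of the spectrum of $J_N$ makes $\vec v$ an eigenvector of $T_N$, say $T_N\vec v=\lambda\vec v$. Since $B_N$ carries $E_{(N^2+2N)/2}(J_N)$ to $E_{N^2+2N-(N^2+2N)/2}(J_N)=E_{(N^2+2N)/2}(J_N)$ and this space is one-dimensional, $B_N\vec v=c\vec v$ for a scalar $c$, and because $B_N$ is involutory, $c^2=1$. Using that $B_N$ conjugates $T_N$ to $T_N^{-1}$, i.e. $B_NT_NB_N=T_N^{-1}$, I then compute
$$\lambda^{-1}\vec v=T_N^{-1}\vec v=B_NT_NB_N\vec v=c^2\lambda\vec v=\lambda\vec v,$$
so $\lambda=\lambda^{-1}$ and hence $\lambda=\pm1$. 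Finally I would rule out $\lambda=-1$ using positivity: the Cholesky decomposition $T_N=B_NB_N^*$ exhibits $T_N$ as positive semidefinite, and $B_N$ is invertible (being involutory, $\det(B_N)=\pm1$), so $T_N$ is positive definite and all its eigenvalues are strictly positive. This excludes $\lambda=-1$, leaving $\lambda=1$, whence $\vec v\in E_1(T_N)$ and therefore $E_{(N^2+2N)/2}(J_N)\subseteq E_1(T_N)$.

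The individual steps are short; the one requiring the most care is the existence claim, where the parity of $N+1$ is exactly what guarantees a fixed point of the eigenvalue involution, and this is the point at which evenness of $N$ is used essentially. Ruling out $\lambda=-1$ is the other place where one must invoke an external structural fact (positive-definiteness) rather than the involution symmetry alone, since the symmetry argument only constrains $\lambda$ to $\{\pm1\}$.
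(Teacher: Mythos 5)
Your proof is correct and follows essentially the same route as the paper: the parity of $N+1$ forces a fixed point of the involution $\lambda\mapsto N^2+2N-\lambda$ on the simple spectrum of $J_N$, commutativity makes the corresponding vector a $T_N$-eigenvector, and the $B_N$-symmetry forces $\lambda=\lambda^{-1}$. The only difference is that you explicitly rule out $\lambda=-1$ via positive definiteness of $T_N=B_NB_N^*$, a step the paper leaves implicit when it asserts that the eigenvalue-$1$ eigenspace is ``the only possible candidate.''
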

\begin{proof}
Note that $N+1$ is odd and that $J_N$ has simple spectrum, so $J_N$ must have an odd number of nontrivial eigenspaces.
The binomial transform acts as an involution on the set of eigenspaces, so at least one eigenspace of $J_N$ must be preserved by $B_N$.
Since $B_N$ sends the eigenspace of $\lambda$ to the eigenspace of $N^2+2N-\lambda$, the only eigenvalue that is fixed is $\lambda=(N^2+2N)/2$.

Since $J_N$ has simple spectrum, the corresponding eigenspace is spanned by a single vector
$$E_{(N^2+2N)/2}(J_N) = \text{span}\{\vec v\}.$$
Also since $B_N$ sends this eigenspace to itself, we know $\vec v$ is an eigenvector of $B_N$.

Finally, since $J_N$ and $T_N$ commute, $\vec v$ is also an eigenvector of $T_N$.
Since $B_N\vec v\in\text{span}(\vec v)$, we know that $\vec v$ belongs to an eigenspace of $T_N$ that $B_N$ preserves.
The only possible candidate is the eigenspace for eigenvalue $1$.  The theorem follows immediately.
\end{proof}

\section{Explicit generating function formula and point couting}
\subsection{The eigenvector with eigenvalue $\lambda = 1$}

A generating function for an eigenvector with eigenvalue $1$ can be computed explicity.
The key to the computation comes from the fact that the generating function in this case exhibits some additional symmetry.
In terms of the differential operator, this symmetry can be described by the differential operator being the symmetric square of a second-order differential operator.
This allows us to solve the differential equation explicitly in terms of solutions of a second-order differential equation.

\begin{defn}\label{defn: symm sq}
Let $\{f_1(z),\dots, f_m(z)\}$ and $\{g_1(z),\dots, g_n(z)\}$ be bases of ther kernels of two monic differential operators $L$ and $\wt L$, of order $m$ and $n$, respectively.
The \vocab{symmetric product} of two monic differential operators $L\symprod \wt L$ is the unique monic differential operator whose kernel is spanned by $\{f_j(z)g_k(z): 1\leq j m,\ 1\leq k\leq n\}$.
If $L=\wt L$, then $L\symprod \wt L$ is called the \vocab{symmetric square} of $L$, and denote $L^{\symprod2}$.
\end{defn}

We first review a simple criteria for checking when a third-order differential operator is a symmetric square.
\begin{lem}[Singer \cite{Singer1} ]\label{lem:symcond}
A third-order differential operator
$$S = \partial_z^3 + u_2(z)\partial_z^2 + u_1(z)\partial_z + u_0(z)$$
is the symmetric square of the second-order differential operator
$$L = \partial_z^2 + v_1(z)\partial_z + v_0(z)$$
if and only if
\begin{align*}
u_2(z) &= 3v_1(z),\\
u_1(z) &= 4v_0(z) + v_1'(z) + 2v_1(z)^2,\\
u_0(z) &= 2v_0'(z) + 4v_0(z)v_1(z).
\end{align*}
\end{lem}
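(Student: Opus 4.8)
The plan is to prove both implications simultaneously by directly constructing the symmetric square of $L$ and reading off its coefficients. Since $L$ has order two, its kernel is two-dimensional; fix a basis $f_1, f_2$ of solutions, so that every element of $\ker L$ satisfies $f'' = -v_1 f' - v_0 f$. By Definition \ref{defn: symm sq}, the symmetric square $L^{\symprod 2}$ is the unique monic third-order operator whose kernel is spanned by the three products $f_1^2$, $f_1 f_2$, $f_2^2$. The strategy is therefore to compute, for an arbitrary product $y = fg$ of two solutions of $L$, the linear third-order differential relation that $y$ satisfies, and to check that the resulting operator carries exactly the three coefficients in the statement.

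First I would differentiate $y = fg$ and repeatedly substitute $f'' = -v_1 f' - v_0 f$ (and the analogous identity for $g$) to eliminate every second- and higher-order derivative of $f$ and $g$. A short computation gives $y' = f'g + fg'$ and
$$y'' = -v_1 y' - 2v_0 y + 2f'g',$$
which rearranges into the key identity $f'g' = \tfrac{1}{2}(y'' + v_1 y' + 2v_0 y)$. Differentiating this identity once more, and using $\tfrac{d}{dz}(f'g') = -2v_1 f'g' - v_0 y'$ (again obtained from the defining second-order equation), produces a relation among $y'''$, $y''$, $y'$ and $y$. After substituting the expression for $f'g'$ and collecting terms, the derivatives of $f$ and $g$ cancel entirely, leaving
$$y''' + 3v_1 y'' + (4v_0 + v_1' + 2v_1^2)y' + (2v_0' + 4v_0 v_1)y = 0.$$
This shows that every product of two solutions of $L$ is annihilated by the monic third-order operator $S_0$ with coefficients $u_2 = 3v_1$, $u_1 = 4v_0 + v_1' + 2v_1^2$, and $u_0 = 2v_0' + 4v_0 v_1$.

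To conclude, I would invoke uniqueness. The products $f_1^2, f_1 f_2, f_2^2$ are linearly independent precisely because $f_1, f_2$ are (their Wronskian is nonvanishing), so they span a three-dimensional space on which $S_0$ vanishes; since $S_0$ is monic of order three, this span is exactly $\ker S_0$, and therefore $S_0 = L^{\symprod 2}$ by the defining property of the symmetric square. The lemma then follows: $S$ is the symmetric square of $L$ if and only if $S = S_0$, which, for monic third-order operators, is equivalent to the three displayed coefficient equations.

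The computation is essentially bookkeeping, so I do not anticipate a serious obstacle; the one point requiring care is the claim that the products span a full three-dimensional space. This is what guarantees that $S_0$ is genuinely the third-order symmetric square rather than a lower-order operator admitting extra solutions, and it rests on the standard fact that $f_1^2, f_1 f_2, f_2^2$ are independent whenever $f_1, f_2$ are. Verifying this independence is the step that makes the final uniqueness argument legitimate.
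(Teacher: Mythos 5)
Your proof is correct, and the computation checks out: differentiating $y=fg$, using $f''=-v_1f'-v_0f$ and $g''=-v_1g'-v_0g$ to get $2f'g'=y''+v_1y'+2v_0y$ and $(f'g')'=-2v_1f'g'-v_0y'$, and eliminating $f'g'$ does yield exactly $y'''+3v_1y''+(4v_0+v_1'+2v_1^2)y'+(2v_0'+4v_0v_1)y=0$. The point of comparison is that the paper gives no argument at all here: the lemma is stated as a known result and attributed to Singer \cite{Singer1}, so you have supplied a self-contained elementary derivation where the paper relies on a citation. Your closing step is also the right one and is genuinely needed: the monic third-order operator you construct annihilates $f_1^2$, $f_1f_2$, $f_2^2$, and since $f_1/f_2$ is non-constant (its derivative is $-W(f_1,f_2)/f_2^2$ with nonvanishing Wronskian), a vanishing combination $af_1^2+bf_1f_2+cf_2^2=0$ would force the quadratic $as^2+bs+c$ to have infinitely many roots, so the three products are independent, the kernel of your operator is exactly their span, and the uniqueness clause in Definition \ref{defn: symm sq} identifies it with $L^{\symprod 2}$. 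The ``only if'' direction then follows because two monic operators of the same order with the same kernel coincide, hence coefficientwise equality. This is a complete proof of the cited lemma, which the paper itself does not provide.
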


Using this criteria, we can prove that for the eigenvalue $1$ of $T_N$ (equivalently, the eigenvalue $(N^2+2N)/2$ of $J_N$) our differential operator is a symmetric square.
\begin{lem}\label{lem:symmsquare}
The third order differential equation
\begin{equation*}
S = \partial_z^3 - 3\frac{2z-1}{z(1-z)}\partial_z^2 + \left(\frac{N^2+2N-6}{z(1-z)} - \frac{N^2+2N}{z^2(1-z)^2}\right)\partial_z + \frac{N^2+2N}{z^2(1-z)} - \frac{\mu}{z^2(1-z)^2}
\end{equation*}
is the symmetric square of a second order differential operator if and only if $\mu = (N^2+2N)/2$.
In this case $S = L^{\symprod 2}$ for
$$L = \partial_z^2 + \frac{1-2z}{z(1-z)}\partial_z - \frac{(N^2+2N)(z^2-z+1) + 1}{4z^2(1-z)^2}.$$
\end{lem}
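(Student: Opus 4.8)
The plan is to apply Singer's criterion (Lemma \ref{lem:symcond}) directly: read off the coefficients of $S$, use the first two Singer equations to pin down the candidate operator $L$ with no freedom, and then interpret the third equation as a consistency condition that selects $\mu$. Writing $S = \partial_z^3 + u_2\partial_z^2 + u_1\partial_z + u_0$, I identify
$$u_2 = \frac{3(1-2z)}{z(1-z)},\qquad u_1 = \frac{N^2+2N-6}{z(1-z)} - \frac{N^2+2N}{z^2(1-z)^2},\qquad u_0 = \frac{N^2+2N}{z^2(1-z)} - \frac{\mu}{z^2(1-z)^2}.$$
The first equation $u_2 = 3v_1$ forces $v_1 = \tfrac{1-2z}{z(1-z)}$, and the second, $u_1 = 4v_0 + v_1' + 2v_1^2$, then forces $v_0 = \tfrac14\bigl(u_1 - v_1' - 2v_1^2\bigr)$. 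Thus both coefficients of any putative $L$ are uniquely determined, and it remains only to check whether such an $L$ is consistent with the third Singer equation.

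To make the second step transparent I would write $D = z(1-z)$, so that $v_1 = D'/D$ is a logarithmic derivative with $D' = 1-2z$ and $D'' = -2$. Then
$$v_1' + 2v_1^2 = \frac{D''D - (D')^2 + 2(D')^2}{D^2} = \frac{(D')^2 - 2D}{D^2} = \frac{1-6D}{D^2},$$
and subtracting this from $u_1$ makes the $1/D$ terms combine cleanly, yielding
$$v_0 = \frac14\left(\frac{N^2+2N}{D} - \frac{N^2+2N+1}{D^2}\right) = -\frac{(N^2+2N)(z^2-z+1)+1}{4z^2(1-z)^2},$$
which is exactly the zeroth-order coefficient in the claimed $L$. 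This already establishes that if $S$ is a symmetric square at all, then $L$ must be the stated operator.

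The decisive step is the third Singer equation $u_0 = 2v_0' + 4v_0 v_1$, now a pure consistency check since $v_0$ and $v_1$ are fixed. With $v_0 = \tfrac14\bigl(A/D - B/D^2\bigr)$ where $A = N^2+2N$ and $B = A+1$, a direct differentiation shows the $B$-dependent (i.e. $1/D^3$) contributions cancel, leaving
$$2v_0' + 4v_0 v_1 = \frac{A\,D'}{2D^2} = \frac{(N^2+2N)(1-2z)}{2z^2(1-z)^2}.$$
Writing $u_0$ over the common denominator $D^2 = z^2(1-z)^2$ as $u_0 = \bigl(A(1-z)-\mu\bigr)/D^2$ and equating, the condition reduces to the polynomial identity $A(1-z) - \mu = \tfrac12 A(1-2z)$. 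I expect this final comparison to be the main point to get right: the coefficients of $z$ on the two sides agree automatically (both equal $-A$), so the identity is not over-determined in $z$ and collapses to the single scalar equation $A - \mu = \tfrac{A}{2}$, that is $\mu = (N^2+2N)/2$. This yields both implications simultaneously — for this value of $\mu$ the three Singer conditions hold and $S = L^{\symprod 2}$, whereas for any other $\mu$ the third condition fails identically in $z$, so no second-order symmetric square root can exist.
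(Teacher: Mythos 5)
Your proposal is correct and follows essentially the same route as the paper: solve the first two Singer equations to pin down $v_1$ and $v_0$ uniquely, then treat the third as a consistency condition that forces $\mu=(N^2+2N)/2$. Your computation (including the cancellation of the $1/D^3$ terms and the collapse to the single scalar equation $A-\mu=A/2$) checks out; the paper simply states these steps with less detail.
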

\begin{proof}
We need to determine when the overdetermined system of differential equations in Lemma \ref{lem:symcond} has a solution.
Solving the first two equations in the Lemma, we get
$$v_1(z) = \frac{1-2z}{z(1-z)}
\quad\text{and}\quad
v_0(z) = -\frac{(N^2+2N)(z^2-z+1) + 1}{4z^2(1-z)^2}.$$
Putting this into the third equation of the Lemma, we find $\mu = (N^2+2N)/2$.
The statement of the lemma follows immediately.
\end{proof}

The previous lemma allows us to build a fundamental set of solutions for our differential equation.
\begin{lem}\label{lem:gensoln}
The general solution of the third order differential equation 
$$y''' - 3\frac{2z-1}{z(1-z)}y'' + \left(\frac{N^2+2N-6}{z(1-z)} - \frac{N^2+2N}{z^2(1-z)^2}\right)y' + \frac{(N^2+2N)(1/2-z)}{z^2(1-z)^2}y = 0$$
has the basis of solutions
\begin{align*}
y_1 &= \left(\frac{z}{1-z}\right)^{N+1}
\pFq{2}{1}{-N/2,N/2+1}{-N}{1-z}^2\\
y_2 &= \pFq{2}{1}{-N/2,N/2+1}{-N}{1-z}\pFq{2}{1}{-N/2,N/2+1}{-N}{z}\\
y_3 &= \left(\frac{1-z}{z}\right)^{N+1}
\pFq{2}{1}{-N/2,N/2+1}{-N}{z}^2
\end{align*}
\end{lem}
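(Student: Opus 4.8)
The third-order equation in this lemma is precisely $Sy=0$ for the operator $S$ of Lemma \ref{lem:symmsquare} with $\mu=(N^2+2N)/2$ (the zeroth-order coefficient $\frac{N^2+2N}{z^2(1-z)}-\frac{\mu}{z^2(1-z)^2}$ collapses to $\frac{(N^2+2N)(1/2-z)}{z^2(1-z)^2}$ at this value of $\mu$). The plan is therefore to exploit the factorization $S=L^{\symprod 2}$ furnished by Lemma \ref{lem:symmsquare}: by Definition \ref{defn: symm sq} the kernel of $S$ is three-dimensional and spanned by the pairwise products $\{f_1^2,\,f_1 f_2,\,f_2^2\}$ of any basis $\{f_1,f_2\}$ of $\ker L$. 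Thus the entire problem reduces to (i) producing an explicit basis of solutions of the second-order equation $Ly=0$, and (ii) recognizing $y_1,y_2,y_3$ as the corresponding symmetric products.

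First I would gauge $L$ onto the Gauss hypergeometric operator with parameters $a=-N/2$, $b=N/2+1$, $c=-N$. Writing $y=\phi(z)w(z)$ and demanding that the coefficient of $w'$ match the hypergeometric value $\frac{c-(a+b+1)z}{z(1-z)}=\frac{-N-2z}{z(1-z)}$ forces $\phi'/\phi=-\frac{N+1}{2}\cdot\frac{1}{z(1-z)}$, hence $\phi(z)=\left(\frac{1-z}{z}\right)^{(N+1)/2}$. A direct substitution then checks that this same $\phi$ reproduces the zeroth-order coefficient $-ab/(z(1-z))=\frac{(N^2+2N)/4}{z(1-z)}$; this is the single routine computation, and it is exactly where the precise potential of $L$ enters. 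Since $a=-N/2$ is a negative integer with $|a|=N/2<N=|c|$, the series ${}_2F_1(-N/2,N/2+1;-N;z)$ truncates to a polynomial of degree $N/2$ before the denominator factor $(-N)_k$ can vanish, so it is a well-defined hypergeometric solution despite the nonpositive integer $c$. Consequently $g_2(z):=\left(\frac{1-z}{z}\right)^{(N+1)/2}{}_2F_1(-N/2,N/2+1;-N;z)$ solves $Ly=0$.

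For the second solution I would invoke the reflection symmetry of $L$: the map $z\mapsto 1-z$ sends $\partial_z\mapsto-\partial_z$, fixes both $z(1-z)$ and $z^2-z+1$, and negates $1-2z$, so it leaves $L$ invariant. Hence $g_1(z):=g_2(1-z)=\left(\frac{z}{1-z}\right)^{(N+1)/2}{}_2F_1(-N/2,N/2+1;-N;1-z)$ is also a solution. The indicial exponents of $L$ at $z=0$ are the roots $\pm(N+1)/2$ of $r^2-(N+1)^2/4$, and near the origin $g_1\sim z^{(N+1)/2}$ while $g_2\sim z^{-(N+1)/2}$, so $\{g_1,g_2\}$ realizes the two distinct exponents and is linearly independent. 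Forming the symmetric products gives exactly $g_1^2=y_1$, $g_1 g_2=y_2$ (the reciprocal prefactors cancel), and $g_2^2=y_3$; these have distinct leading powers $z^{N+1},z^0,z^{-(N+1)}$ at $z=0$, hence are linearly independent and span the three-dimensional $\ker S$, which is the asserted general solution.

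The main obstacle will be pinning down the gauge factor $\phi$ and confirming that it simultaneously corrects both the first- and zeroth-order coefficients of $L$ to those of the hypergeometric operator: the first-order match determines $\phi$ uniquely, so the real content is that the zeroth-order coefficient then lands correctly, which happens precisely because the potential has the form supplied by Lemma \ref{lem:symmsquare}. A secondary point to handle with care, rather than difficulty, is the degenerate parameter $c=-N$, where one must note the early truncation of the series to justify that ${}_2F_1(-N/2,N/2+1;-N;z)$ is a genuine polynomial solution.
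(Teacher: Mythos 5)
Your proof is correct, and it shares the paper's backbone: both arguments reduce the third-order equation to the second-order operator $L$ of Lemma \ref{lem:symmsquare} via the symmetric-square decomposition $S=L^{\symprod 2}$ (after noting, as you do, that the stated equation is $Sy=0$ at $\mu=(N^2+2N)/2$), and then obtain $y_1,y_2,y_3$ as the pairwise products of a basis of $\ker L$. Where you genuinely diverge is in how $\ker L$ is identified. The paper substitutes $t=2z-1$ to turn $Ly=0$ into the associated Legendre equation of degree $N/2$ and order $N+1$ and quotes the Ferrers function and its $180^\circ$ rotation as a basis; you instead gauge $L$ directly onto the Gauss hypergeometric operator with parameters $(-N/2,\,N/2+1;\,-N)$ by the factor $\bigl(\tfrac{1-z}{z}\bigr)^{(N+1)/2}$, and produce the second solution from the manifest $z\mapsto 1-z$ symmetry of $L$. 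The two are computationally equivalent, but your route is more self-contained (no special-function identities are imported), it makes explicit the truncation needed to justify ${}_2F_1(-N/2,N/2+1;-N;z)$ at the degenerate parameter $c=-N$, and it gives a clean independence argument via the indicial exponents $\pm(N+1)/2$ at $z=0$ and the distinct leading powers $z^{N+1},z^0,z^{-(N+1)}$ of the three products. It also surfaces a detail the paper's proof glosses over: the prefactor of the Ferrers-type solution must carry the exponent $(N+1)/2$, not the $N+1$ displayed there, in order for the squares to reproduce $y_1$ and $y_3$; your gauge computation pins that exponent down unambiguously.
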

\begin{proof}
If we do the change of variables $t = 2z-1$, then the differential equation
$$y'' + \frac{1-2z}{z(1-z)}y' - \frac{(N^2+2N)(z^2-z+1) + 1}{4z^2(1-z)^2}y = 0$$
becomes
$$4y'' - \frac{8t}{(1-t^2)}y' - \frac{(N^2+2N)(t^2+3) + 4}{(1-t^2)^2}y = 0,$$
which simplifies to the Legendre differential equation
$$(1-t^2)y'' - 2ty' + \left(\frac{N(N+2)}{4} - \frac{(N+1)^2}{1-t^2}\right)y = 0.$$
Two linearly independent solutions of this equation are given by the Ferrers function
$$
\left(\frac{1+t}{1-t}\right)^{N+1}\pFq{2}{1}{-N/2,N/2+1}{-N}{\frac{1}{2}-\frac{1}{2}t}
$$
and its $180$ degree rotation
$$
\left(\frac{1-t}{1+t}\right)^{N+1}\pFq{2}{1}{-N/2,N/2+1}{-N}{\frac{1}{2}+\frac{1}{2}t}.
$$
The statement of our lemma then follows by substituting $t = 2z-1$ and using Lemma \ref{lem:symmsquare}.
\end{proof}

Before proving Theorem A, we require one more identity regarding hypergeometric functions.
\begin{lem}\label{lem:helper}
For any nonnegative integer $N$
\begin{align*}
\pFq{2}{1}{-N/2,N/2+1}{-N}{\frac{z}{z-1}}(1-z)^{N/2}
  &= \pFq{2}{1}{-N/2,N/2+1}{-N}{z}(1-z)^{N+1}\\
  &+ (-1)^{N/2}\pFq{2}{1}{-N/2,N/2+1}{-N}{1-z}z^{N+1}
\end{align*}
\end{lem}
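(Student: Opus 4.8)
The plan is to reduce the left-hand side to a single terminating hypergeometric polynomial and then recognize the right-hand side as a linear combination of two solutions of the second-order equation that this polynomial satisfies. Write $f(z) = \pFq{2}{1}{-N/2,N/2+1}{-N}{z}$, a polynomial of degree $N/2$. First I would apply the terminating Pfaff transformation $\pFq{2}{1}{-n,b}{c}{z} = (1-z)^n\pFq{2}{1}{-n,c-b}{c}{\frac{z}{z-1}}$ with $n=N/2$, $b=-3N/2-1$, $c=-N$, which is valid as an identity of polynomials because the terminating top parameter $-N/2$ is preserved by it. This rewrites the left-hand side as the single polynomial
\[
G(z) := \pFq{2}{1}{-N/2,-3N/2-1}{-N}{z},
\]
again of degree $N/2$. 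By construction $G$ is the regular-at-$0$ polynomial solution of the hypergeometric equation $(\star)$ with parameters $-N/2,\,-3N/2-1,\,-N$, that is, $z(1-z)y'' + (2Nz-N)y' - (3N^2/4+N/2)y = 0$. After this reduction the lemma is equivalent to the polynomial identity $G(z) = (1-z)^{N+1}f(z) + (-1)^{N/2}z^{N+1}f(1-z)$.

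Next I would exhibit the two terms on the right as solutions of $(\star)$. Substituting $y=(1-z)^{N+1}u$ into $(\star)$ and simplifying is a purely algebraic conjugation of operators, hence legitimate despite the negative-integer parameter $c=-N$; a direct calculation shows it returns exactly the hypergeometric equation with parameters $-N/2,\,N/2+1,\,-N$, of which $f$ is a solution. Thus $(1-z)^{N+1}f(z)$ solves $(\star)$. Since $(\star)$ is manifestly invariant under $z\mapsto 1-z$, applying that symmetry shows $z^{N+1}f(1-z)$ is also a solution. These two solutions are linearly independent: at $z=0$ the first takes the value $f(0)=1$ while the second vanishes. As $(\star)$ is a second-order linear equation, its two-dimensional solution space is spanned by them, and since $G$ is also a solution there are constants $\mu,\nu$ with
\[
G(z) = \mu(1-z)^{N+1}f(z) + \nu z^{N+1}f(1-z).
\]

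Finally I would pin down the constants. Evaluating at $z=0$ gives $1 = G(0) = \mu f(0) = \mu$, so $\mu=1$. To find $\nu$ I would compare the coefficients of the top degree $3N/2+1$ on both sides: because $\deg G = N/2 < 3N/2+1$, the degree-$(3N/2+1)$ contributions of the two right-hand terms must cancel. Writing $c\neq 0$ for the leading coefficient of $f$ (which has degree exactly $N/2$), this cancellation reads $\mu(-1)^{N+1} + \nu(-1)^{N/2} = 0$, which with $\mu=1$ and $N$ even yields $\nu=(-1)^{N/2}$. Substituting $\mu=1,\ \nu=(-1)^{N/2}$ and undoing the Pfaff step of the first paragraph gives the stated identity. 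The one genuinely delicate point is the second paragraph: one is tempted to use the Euler transformation to assert $G(z)=(1-z)^{N+1}f(z)$ outright, but that fails precisely because $c=-N$ is a negative integer (indeed $(1-z)^{N+1}f$ has degree $3N/2+1$, not $N/2$). The correct statement is only that $(1-z)^{N+1}f(z)$ is \emph{one} solution of $(\star)$; the genuine polynomial $G$ differs from it by the companion-solution correction $(-1)^{N/2}z^{N+1}f(1-z)$, and it is exactly this correction term that the lemma records.
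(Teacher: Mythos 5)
Your proof is correct and follows essentially the same route as the paper: apply the terminating Pfaff transformation to rewrite the left-hand side as $\pFq{2}{1}{-N/2,-3N/2-1}{-N}{z}$, observe that this and the two terms on the right all solve the same second-order hypergeometric equation whose solution space the right-hand terms span, and then fix the two constants by evaluation. The only (harmless) divergences are that you justify in more detail why $(1-z)^{N+1}f(z)$ and $z^{N+1}f(1-z)$ are solutions, and you determine the second constant by cancelling the top-degree coefficients, whereas the paper uses the palindromic symmetry of $\pFq{2}{1}{-N/2,N/2+1}{-N}{z}$ and a limit as $z\to 1$.
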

\begin{proof}
First notice that 
$(1-z)^{N+1}
\pFq{2}{1}{-N/2,N/2+1}{-N}{z}^2$
and
$\pFq{2}{1}{-N/2,N/2+1}{-N}{1-z}z^{N+1}$
are linearly independent solutions of the hypergeometric differential equation
$$z(1-z)y''+(-N+2Nz)y' - N(3N+1)y = 0$$
near $z=1$.
By the Pfaff transformation
$$\pFq{2}{1}{-N/2,N/2+1}{-N}{\frac{z}{z-1}}(1-z)^{N/2} = \pFq{2}{1}{-N/2,-3N/2-1}{-N}{z},$$
so that $F(z)$ is a solution of the same hypergeometric differential equation.
Therefore
\begin{align*}
\pFq{2}{1}{-N/2,N/2+1}{-N}{\frac{z}{z-1}}(1-z)^{N/2}
  &= A\cdot\pFq{2}{1}{-N/2,N/2+1}{-N}{z}(1-z)^{N+1}\\
  &+ B\cdot\pFq{2}{1}{-N/2,N/2+1}{-N}{1-z}z^{N+1}
    \end{align*}
for some constants $A$ and $B$.
Evaluating at $z=0$, we immediately see
$$A=\pFq{2}{1}{-N/2,N/2+1}{-N}{0} = 1.$$
To get $B$, we wish to take the limit as $z\rightarrow 1$.
Since $\pFq{2}{1}{-N/2,N/2+1}{-N}{z}$ is a palendromic polynomial, we have
$$\pFq{2}{1}{-N/2,N/2+1}{-N}{\frac{z}{z-1}}(1-z)^{N/2} = (-1)^{N/2}\pFq{2}{1}{-N/2,N/2+1}{-N}{\frac{z-1}{z}}z^{N/2}.$$
Thus by taking the limit, we find
$$B=(-1)^{N/2}\pFq{2}{1}{-N/2,N/2+1}{-N}{0} = (-1)^{N/2}.$$
This completes the proof.
\end{proof}

Combining all the lemmas above, we can now prove a theorem that is essentially the same as Theorem A.
\begin{thm}\label{thm:almostA}
For $N> 0$ an even integer and $\mu = (N^2+2N)/2$, the differential equation
\begin{align*}
 \mu\wt y
  &= z^2(1-z)^2\wt y''' + 3z(1-z)((N-1)z-N)\wt y''\\
  &+ N((2N-5)z^2+(2-5N)z+2N+1)\wt y'\\
  &+ N((2N+1)z+N^2+N+1)\wt y
\end{align*}
has the polynomial solution 
$$\wt y = \pFq{2}{1}{-N/2,N/2+1}{-N}{z}\pFq{2}{1}{-N/2,N/2+1}{-N}{\frac{z}{z-1}}(1-z)^{N/2}.$$
\end{thm}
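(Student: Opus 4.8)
The plan is to recognize the third-order equation in the statement as a gauge transform of the symmetric-square operator from Lemma \ref{lem:symmsquare}, and then to assemble the claimed solution from the basis produced in Lemma \ref{lem:gensoln}. First I would divide the equation through by $z^2(1-z)^2$ to put it in monic form $\mathcal{D}\wt y=0$, and compare its subleading coefficient $\frac{3((N-1)z-N)}{z(1-z)}$ with that of the operator $S$, namely $-\frac{3(2z-1)}{z(1-z)}$. Their difference is $-\frac{3(N+1)}{z}=3(\log g)'$ with $g=z^{-(N+1)}$, which identifies the candidate conjugation $\mathcal{D}=z^{N+1}\circ S\circ z^{-(N+1)}$; equivalently, the substitution $y=z^{N+1}w$ should carry $Sw=0$ into $\mathcal{D}y=0$. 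I would confirm this by substituting $y=z^{N+1}w$ into $\mathcal{D}$ and checking that it collapses to $z^{N+1}(Sw)$, giving $\ker\mathcal{D}=z^{N+1}\ker S$. Since $\mu=(N^2+2N)/2$ is exactly the value that makes $S$ a symmetric square, Lemma \ref{lem:gensoln} then supplies the spanning set $z^{N+1}y_1,\ z^{N+1}y_2,\ z^{N+1}y_3$ for $\ker\mathcal{D}$.

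Next I would locate $\wt y$ inside this basis using Lemma \ref{lem:helper}. Writing $P(z)=\pFq{2}{1}{-N/2,N/2+1}{-N}{z}$ and $R(z)=\pFq{2}{1}{-N/2,N/2+1}{-N}{1-z}$, I multiply the identity of Lemma \ref{lem:helper} by $P(z)$ to get
$$\wt y = (1-z)^{N+1}P(z)^2 + (-1)^{N/2}z^{N+1}P(z)R(z).$$
Recognizing $(1-z)^{N+1}P^2 = z^{N+1}\left(\tfrac{1-z}{z}\right)^{N+1}P^2 = z^{N+1}y_3$ and $z^{N+1}PR = z^{N+1}y_2$, I obtain
$$\wt y = z^{N+1}y_3 + (-1)^{N/2}z^{N+1}y_2,$$
a constant-coefficient combination of two elements of $z^{N+1}\ker S=\ker\mathcal{D}$. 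Hence $\wt y$ solves $\mathcal{D}\wt y=0$, which is the stated equation. That $\wt y$ is genuinely a polynomial is immediate: $P$ has degree $N/2$, and by the Pfaff transformation quoted in Lemma \ref{lem:helper} the second factor equals $\pFq{2}{1}{-N/2,-3N/2-1}{-N}{z}$, again of degree $N/2$, so $\wt y$ has degree $N$.

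The main obstacle is the verification of the conjugation $\mathcal{D}=z^{N+1}\circ S\circ z^{-(N+1)}$. Matching the subleading coefficient pins down $g=z^{-(N+1)}$ cheaply, but one still has to confirm that the remaining two lower-order coefficients agree after conjugation; this is a direct but slightly lengthy manipulation of rational functions in $z$ (equivalently, an expansion of $\mathcal{D}(z^{N+1}w)$ in $w,w',w'',w'''$ and comparison with $z^{N+1}Sw$). Once that relation is established, the symmetric-square structure of $S$ (Lemma \ref{lem:symmsquare}), the explicit solution basis (Lemma \ref{lem:gensoln}), and the hypergeometric identity (Lemma \ref{lem:helper}) combine immediately to finish the proof.
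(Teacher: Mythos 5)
Your proposal is correct and follows essentially the same route as the paper's proof: the substitution $\wt y = z^{N+1}y$ reduces the stated equation to the symmetric-square operator of Lemma \ref{lem:symmsquare}, and the claimed solution is identified as $z^{N+1}\bigl(y_3+(-1)^{N/2}y_2\bigr)$ by combining Lemma \ref{lem:gensoln} with Lemma \ref{lem:helper}. The only difference is presentational: the paper asserts the gauge reduction outright, whereas you additionally explain how the conjugating factor $z^{N+1}$ is pinned down by matching subleading coefficients.
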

\begin{proof}
If we do the substitution $\wt y=z^{N+1}y,$
then the differential equation simplifies to the third order equation in Lemma \eqref{lem:symmsquare}.
Therefore Lemma \eqref{lem:gensoln} tells us
\begin{align*}
\wt y 
  &= (1-z)^{N+1}
\pFq{2}{1}{-N/2,N/2+1}{-N}{z}^2\\
  &+ (-1)^{N/2}z^{N+1}\pFq{2}{1}{-N/2,N/2+1}{-N}{1-z}\pFq{2}{1}{-N/2,N/2+1}{-N}{z}
\end{align*}
is a solution.
Now if we apply the result of Lemma \ref{lem:helper}, the theorem follows immediately.
\end{proof}

We finish this section with a proof of Theorem A.
\begin{proof}[Proof of Theorem A]
The function 
$$f(z) = \pFq{2}{1}{-N/2,N/2+1}{-N}{z}\pFq{2}{1}{-N/2,N/2+1}{-N}{\frac{z}{z-1}}(1-z)^{N/2}$$
is a polynomial of degree $N$, and therefore $f(z) = f(\vec v;z)$ for some vector $\vec v\in\mathbb{C}^{N+1}$.
By Theorem \ref{thm:almostA} and Theorem \ref{thm:diff eqn}, the vector $\vec v$ is an eigenvector of $J_N$ with eigenvalue $\frac{N^2+N}{2}$.
Finally, by Theorem \ref{thm:which one} we know $\vec v$ is an eigenvector of $T_N$ with eigenvalue $1$.
\end{proof}

\subsection{Counting points over finite fields}
It turns out that the generating function of the eigenvector with eigenvalue $1$
$$f(z) = \pFq{2}{1}{-N/2,N/2+1}{-N}{z}\pFq{2}{1}{-N/2,N/2+1}{-N}{\frac{z}{z-1}}(1-z)^{N/2}$$
has a lot of symmetries modulo $p$.
In particular, one can check
$$f(z) = f(1-z) = f\left(\frac{1}{z}\right) = f\left(\frac{1}{1-z}\right) = f\left(1-\frac{1}{z}\right) = f\left(\frac{z-1}{z}\right),$$
for all $z\in\mathbb F_p$ with $z\neq 0,1$.
The set of M\"obius transformations
$$G = \left\lbrace z,1-z,\frac{1}{z},\frac{1}{1-z},1-\frac{1}{z},\frac{z-1}{z}\right\rbrace$$
defines a subgroup of $\text{PGL}_2(\mathbb{Z})$.
This group is strongly linked with the Legendre family of elliptic curves
$$E_z: y^2=x(x-1)(x-z)$$
over $\mathbb F_p$.
In particular, two curves $E_z$ and $E_w$ are isomorphic if and only if $w=\chi(z)$ for some $\chi\in G$.
Consequently, the value $f(z)$ should be some isomorphism invariant of the elliptic curve $E_z$.
In this section, we prove exactly that.
Namely, we prove Theorem C that
$$f(z)\equiv (\# E_z(\mathbb F_p)-1)^2\mod p.$$

One well-known result from number theory is that $\# E_z(\mathbb F_p)-1$ modulo $p$ is given (up to a sign) by the \vocab{Igusa polynomial}
$$H_p(z) = \sum_{k=0}^{(p-1)/2}\binom{(p-1)/2}{k}^2z^k.$$
In particular, this leads to the Deuring-Hasse criterion that $E_z$ is supersingular if and only if $H_p(z)\equiv 0\mod p$ \cite{Hasse,Silverman}.
\begin{thm}[Hasse \cite{Hasse}]
Let $p>2$ be prime and $z\in \mathbb F_p$ with $z\neq 0,1$.
Then
$$(-1)^{(p-1)/2}H_p(z)\equiv 1-\#E_z(\mathbb F_p)\mod p.$$
\end{thm}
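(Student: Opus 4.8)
The plan is to compute $\#E_z(\mathbb F_p)$ directly as a character sum and then reduce that sum modulo $p$ to the Igusa polynomial. Write $f(x) = x(x-1)(x-z)$. For each $x\in\mathbb F_p$ the number of $y\in\mathbb F_p$ solving $y^2 = f(x)$ equals $1 + \left(\frac{f(x)}{p}\right)$, under the convention $\left(\frac{0}{p}\right)=0$. Summing over $x$ and adding the point at infinity,
$$\#E_z(\mathbb F_p) = p + 1 + \sum_{x\in\mathbb F_p}\left(\frac{f(x)}{p}\right),$$
so that modulo $p$ one has $1 - \#E_z(\mathbb F_p) \equiv -\sum_{x\in\mathbb F_p}\left(\frac{f(x)}{p}\right)$.

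Next I would apply Euler's criterion $\left(\frac{a}{p}\right)\equiv a^{(p-1)/2}\pmod p$ (which holds for every $a\in\mathbb F_p$, including $a=0$) and set $m=(p-1)/2$, reducing the problem to evaluating the power sum $\sum_{x\in\mathbb F_p} f(x)^m \pmod p$. Expanding $f(x)^m = x^m(x-1)^m(x-z)^m$ by the binomial theorem produces monomials $x^{m+i+j}$ with $0\le i,j\le m$ and coefficient $\binom{m}{i}\binom{m}{j}(-1)^{m-i}(-z)^{m-j}$. Here I would invoke the standard identity $\sum_{x\in\mathbb F_p} x^k\equiv -1\pmod p$ when $(p-1)\mid k$ with $k\ge 1$, and $\equiv 0$ otherwise. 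Since the exponent $m+i+j$ ranges over $[m,3m]$ and $p-1 = 2m$, the only positive multiple of $p-1$ in this range is $2m$ itself, so only the diagonal terms with $i+j = m$ survive.

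Collecting the surviving terms and using $\binom{m}{m-i}=\binom{m}{i}$ together with $(-1)^{m-i}(-z)^i = (-1)^m z^i$, the double sum collapses to
$$\sum_{x\in\mathbb F_p} f(x)^m \equiv -(-1)^m\sum_{i=0}^m\binom{m}{i}^2 z^i = -(-1)^m H_p(z)\pmod p.$$
Combining this with the point-count congruence gives $1 - \#E_z(\mathbb F_p)\equiv (-1)^m H_p(z) = (-1)^{(p-1)/2}H_p(z)\pmod p$, which is exactly the claimed identity. The main obstacle is the bookkeeping in the power-sum step: one must carefully justify the degree-range argument pinning down $i+j=m$ as the unique surviving case, and then track the signs through the binomial expansion so as to land precisely on the factor $(-1)^{(p-1)/2}$ rather than its negative.
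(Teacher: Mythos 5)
Your proof is correct: the character-sum count of affine points, Euler's criterion, and the power-sum identity $\sum_{x\in\mathbb F_p}x^k\equiv -1$ for $(p-1)\mid k$, $k\ge 1$ (and $\equiv 0$ otherwise) combine exactly as you describe, and the sign bookkeeping via $(-1)^{m-i}(-z)^i=(-1)^m z^i$ does land on $(-1)^{(p-1)/2}H_p(z)$. Note, however, that the paper does not prove this statement at all --- it is quoted as a classical theorem with a citation to Hasse (and the Deuring--Hasse criterion in Silverman) --- so there is no in-paper argument to compare against; what you have written is the standard textbook derivation of that cited result, and it is complete as it stands (the only points worth making explicit are that Euler's criterion is also valid at $a=0$ under the convention $\left(\frac{0}{p}\right)=0$, and that every exponent $m+i+j$ in your expansion is at least $1$, so the $k=0$ case of the power-sum identity never intervenes).
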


\begin{lem}
Let $N=p-1$ for an odd prime $p$.
Then
$$\pFq{2}{1}{-N/2,N/2+1}{-N}{z} \equiv \pFq{2}{1}{-N/2,N/2+1}{-N}{z/(z-1)}(1-z)^{N/2}\mod p.$$
\end{lem}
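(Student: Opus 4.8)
The plan is to reduce the claim to a term-by-term comparison of two terminating hypergeometric polynomials. First I would invoke the Pfaff transformation already recorded in the proof of Lemma \ref{lem:helper}, namely
$$\pFq{2}{1}{-N/2,N/2+1}{-N}{\frac{z}{z-1}}(1-z)^{N/2} = \pFq{2}{1}{-N/2,-3N/2-1}{-N}{z},$$
which rewrites the right-hand side as a single polynomial in $z$. The lemma then asserts the congruence
$$\pFq{2}{1}{-N/2,N/2+1}{-N}{z}\equiv \pFq{2}{1}{-N/2,-3N/2-1}{-N}{z}\pmod p.$$
Both series terminate at $k=N/2$ on account of the top parameter $-N/2$, they share this top parameter and the bottom parameter $-N$, and they differ only in the remaining top parameter: $N/2+1$ on the left versus $-3N/2-1$ on the right. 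So it suffices to compare the two series coefficient by coefficient.

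The key arithmetic input is that these two differing parameters are congruent modulo $p$. Since $N=p-1$, we have $N+1=p$ and hence
$$\left(-\tfrac{3N}{2}-1\right)-\left(\tfrac{N}{2}+1\right) = -2N-2 = -2(N+1) = -2p \equiv 0 \pmod p.$$
Consequently each factor of the Pochhammer symbol satisfies $-\tfrac{3N}{2}-1+j \equiv \tfrac{N}{2}+1+j \pmod p$, so that
$$\left(-\tfrac{3N}{2}-1\right)_k \equiv \left(\tfrac{N}{2}+1\right)_k \pmod p \qquad\text{for every } 0\le k\le N/2,$$
while the remaining ingredients of the two coefficients, namely $(-N/2)_k$, $(-N)_k$, and $k!$, are literally identical on both sides.

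To conclude, I would check that this congruence of numerators descends to a congruence of the full coefficients, which requires the denominators to be $p$-adic units. For $0\le k\le N/2=(p-1)/2$ the factor $k!$ is a unit modulo $p$, and the integers making up $(-N)_k = (-(p-1))_k$ are $1-p,2-p,\dots,k-p$, each congruent to a nonzero residue $1,2,\dots,k$ modulo $p$; in fact $(-N)_k\equiv k!\pmod p$. Thus both coefficients are $p$-integral, their difference has a $p$-unit denominator and a numerator divisible by $p$, and so the two polynomials agree modulo $p$. The main (and essentially only) obstacle is the careful Pochhammer bookkeeping modulo $p$: one must confirm that no denominator becomes divisible by $p$ over the entire range $0\le k\le (p-1)/2$, so that reducing the rational coefficients modulo $p$ is legitimate. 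As a byproduct of the same computation, using $-N\equiv 1$ and $N/2\equiv -1/2\pmod p$ one finds that each side is in fact congruent to the Igusa polynomial $H_p(z)=\sum_{k=0}^{(p-1)/2}\binom{(p-1)/2}{k}^2 z^k$, which is the form in which this congruence feeds into Hasse's theorem and the proof of Theorem C.
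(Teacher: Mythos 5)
Your proposal is correct and follows essentially the same route as the paper: apply the Pfaff transformation to rewrite the right-hand side as $\pFq{2}{1}{-N/2,-3N/2-1}{-N}{z}$, then compare coefficients using $-3N/2-1\equiv N/2+1\pmod p$ (equivalently $N\equiv -1$). Your extra verification that the denominators $(-N)_k$ and $k!$ are $p$-adic units for $0\le k\le (p-1)/2$ is a point the paper's proof leaves implicit, and it is a welcome addition.
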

\begin{proof}
Using the Pfaffian identity and that $N\equiv -1\mod p$ we have
\begin{align*}
\pFq{2}{1}{-N/2,N/2+1}{-N}{z/(z-1)}(1-z)^{N/2}
  & = \pFq{2}{1}{-N/2,-3N/2-1}{-N}{z}\\
  & = \sum_{k=0}^N \frac{(-N/2)_k(-3N/2-1)_k}{(-N)_k}\frac{z^k}{k!}\\
  &\equiv \sum_{k=0}^N \frac{(-N/2)_k(N/2+1)_k}{(-N)_k}\frac{z^k}{k!}\mod p\\
   & = \pFq{2}{1}{-N/2,N/2+1}{-N}{z}.
\end{align*}
\end{proof}

\begin{lem}
Let $N=p-1$ for an odd prime $p$.
Then
$$H_p(z) \equiv \pFq{2}{1}{-N/2,N/2+1}{-N}{z} \mod p.$$
\end{lem}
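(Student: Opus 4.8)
The plan is to compare the two polynomials coefficient by coefficient modulo $p$. Write $m = N/2 = (p-1)/2$, so that $N = 2m = p-1$ and $p = 2m+1$. The Igusa polynomial $H_p(z)$ has $z^k$-coefficient $\binom{m}{k}^2$ for $0\le k\le m$, while the coefficient of $z^k$ in $\pFq{2}{1}{-N/2,N/2+1}{-N}{z}$ is
$$c_k = \frac{(-m)_k(m+1)_k}{(-2m)_k\, k!}.$$
Since $-m$ is a nonpositive integer, the factor $(-m)_k$ vanishes for $k>m$, so this hypergeometric series truncates at $k=m$ and is a polynomial of the same degree $m$ as $H_p(z)$. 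Thus it suffices to prove $c_k\equiv \binom{m}{k}^2 \mod p$ for each $0\le k\le m$.

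First I would reduce each Pochhammer symbol in $c_k$ modulo $p$, using only the relation $p = 2m+1$. For the denominator, $-2m\equiv 1\mod p$, and since $k\le m<p$ the $k$ consecutive factors of $(-2m)_k$ reduce to $1,2,\dots,k$, giving $(-2m)_k \equiv k!\mod p$; in particular $(-2m)_k$ is a unit mod $p$ throughout the range, so the division is legitimate. For the numerator I would pair the factors of $(m+1)_k$ with those of $(-m)_k$: since $(m+j)+(m+1-j)=2m+1=p$, we have $m+j\equiv -(m+1-j)\mod p$, whence
$$(m+1)_k = \prod_{j=1}^k (m+j) \equiv (-1)^k\prod_{j=1}^k (m+1-j) = (-1)^k\frac{m!}{(m-k)!} = (-m)_k \mod p.$$
Combining these gives $c_k\equiv (-m)_k^2/(k!)^2 = \bigl((-m)_k/k!\bigr)^2 \mod p$, and since $(-m)_k = (-1)^k m!/(m-k)!$ we have $(-m)_k/k! = (-1)^k\binom{m}{k}$, so $c_k\equiv \binom{m}{k}^2\mod p$, as desired. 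An equivalent route is to note that $c_k$ and $\binom{m}{k}^2$ satisfy the same first-order recurrence modulo $p$ with the same initial term $c_0=\binom{m}{0}^2=1$: the hypergeometric ratio $c_{k+1}/c_k = \frac{(k-m)(k+m+1)}{(k-2m)(k+1)}$ reduces, via $-2m\equiv 1$ and $k+m+1\equiv k-m \mod p$ (these differ by $2m+1=p$), to $\frac{(m-k)^2}{(k+1)^2} = \binom{m}{k+1}^2/\binom{m}{k}^2$.

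The content of the lemma lies entirely in these elementary congruences, so there is no serious obstacle. The only points requiring care are confirming that the hypergeometric sum genuinely terminates at $k=m$, so that no coefficients with $k>m$ need to be matched, and that the denominator $(-2m)_k$ is invertible mod $p$ across the relevant range; both follow immediately from $k\le m<p$. This lemma, together with the preceding one identifying $\pFq{2}{1}{-N/2,N/2+1}{-N}{z}$ with $\pFq{2}{1}{-N/2,N/2+1}{-N}{z/(z-1)}(1-z)^{N/2}$ mod $p$ and with Hasse's theorem, then yields $f(z)\equiv(\#E_z(\mathbb F_p)-1)^2 \mod p$.
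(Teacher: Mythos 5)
Your proof is correct and takes essentially the same route as the paper's: a term-by-term comparison of coefficients using $(-N)_k\equiv k!\pmod p$ together with the fact that both upper parameters $-N/2$ and $N/2+1$ reduce to the same residue modulo $p$. The paper expresses that last point by reducing $\binom{N/2}{k}$ and both Pochhammer symbols to $(1/2)_k$, while you express it as the pairing congruence $(m+1)_k\equiv(-m)_k$; these are the same computation in different notation.
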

\begin{proof}
Since $(-N)_k\equiv k!\mod p$ and
\begin{align*}
\binom{N/2}{k}
  & = \frac{(N/2)(N/2-1)\dots(N/2-k+1)}{k!}\\
  & \equiv \frac{(-1/2)(-1/2-1)\dots(-1/2-k+1)}{k!}\mod p
   = (-1)^k\frac{(1/2)_k}{k!},
\end{align*}
we calculate
\begin{align*}
H_p(z)
 & \equiv \sum_{k=0}^{N/2}\frac{(1/2)_k(1/2)_k}{(-N)_k}\frac{z^k}{k!}\mod p \\
 & \equiv \pFq{2}{1}{-N/2,N/2+1}{-N}{z} \mod p.
\end{align*}
\end{proof}
The statement of Theorem C is simply the combination of Hasse's Theorem and the previous two lemmas.

\section*{Acknowledgements}
\thanks{The research of W.R.C. has been supported by an AMS-Simons Research Enhancement Grant.}

\end{document}